\documentclass[12pt]{amsart}
\usepackage[top=1in, bottom=1in, left=1in, right=1in]{geometry}
\usepackage{amssymb,mathrsfs,amsmath}

\newtheorem{theorem}{Theorem}[section]
\newtheorem{cor}[theorem]{Corollary}
\newtheorem{lemma}[theorem]{Lemma}

\theoremstyle{definition}

\newtheorem{rk}[theorem]{Remark}

\numberwithin{equation}{section}


\newcommand {\SN} {{\mathbb N}}
\newcommand {\SR} {{\mathbb R}}

\newcommand {\SZ} {{\mathbb Z}}
\newcommand {\SP} {{\mathbb P}}

\newcommand{\bsp}{\begin{split}}
\newcommand{\esp}{\end{split}}
\newcommand{\be}{\begin{equation}}
\newcommand{\ee}{\end{equation}}
\newcommand{\bes}{\begin{equation*}}
\newcommand{\ees}{\end{equation*}}

\newcommand{\eq}[2]{ \begin{equation} \label{#1}\begin{split} #2 \end{split} \end{equation} }
\newcommand{\al}[1]{\begin{align} #1 \end{align} }
\newcommand{\als}[1]{\begin{align*} #1 \end{align*} }

\newcommand{\bv}\boldsymbol{}
\newcommand{\prob}{\textbf{Prob}}

\renewcommand{\P}{\mathcal{P}}
\newcommand{\N}{\mathcal{N}}
\newcommand{\M}{\mathcal{M}}

\newcommand{\fl}[1]{\left\lfloor#1\right\rfloor}

\newcommand{\nn}{\nonumber \\}
\numberwithin{equation}{section}

\begin{document}

\title[On the concentration of certain additive functions]{On the concentration of certain additive functions}

\author{Dimitris Koukoulopoulos}
\address{D\'epartement de math\'ematiques et de statistique\\
Universit\'e de Montr\'eal\\
CP 6128 succ. Centre-Ville\\
Montr\'eal, Qu\'ebec H3C 3J7\\
Canada}
\email{{\tt koukoulo@dms.umontreal.ca}}

\subjclass[2010]{Primary: 11N60, 11K65} 

\date{\today}

\maketitle

\begin{center}
\dedicatory{\textit{ \small{In memoriam Jonas Kubilius} }}
\end{center}

\begin{abstract} We study the concentration of the distribution of an additive function, when the sequence of prime values of $f$ decays fast and has good spacing properties. In particular, we prove a conjecture by Erd\H os and K\'atai on the concentration of $f(n)=\sum_{p|n}(\log p)^{-c}$ when $c>1$.
\end{abstract}


\section{Introduction}\label{intro} An arithmetic function $f:\SN\to\SR$ is called \textit{additive} if $f(mn)=f(m)+f(n)$ whenever $(m,n)=1$. According to the Kubilius probabilistic model of the integers, statistical properties of additive functions can be modeled by statistical properties of sums of independent random variables. We describe this model in the case that $f$ is a {\it strongly additive function}, that is to say $f$ satisfies the relation $f(n)=\sum_{p|n}f(p)$; the general case is slightly more involved. Let $\SP$ denote the set of prime numbers and consider a sequence of independent Bernoulli random variables $\{X_p:p\in\SP\}$ such that 
\[
\prob(X_p=1)=\frac1p\quad\text{and}\quad\prob(X_p=0)=1-\frac1p.
\]
The random variable $X_p$ can be thought as a model of the characteristic function of the event $\{n\in\SN:p|n\}$. Then a probabilistic model for $f$ is given by the random variable $\sum_pf(p)X_p$.

The above model and well-known facts from probability theory lead to the prediction that the values of $f$ follow a certain distribution, possibly after rescaling them appropriately. In fact, the Erd\H os-Wintner theorem\ \cite{ew} states that if the series 
\eq{series}{
\sum_{|f(p)|\le1}\frac{f(p)}p,
\quad \sum_{|f(p)|\le1}\frac{f^2(p)}p,
\quad \sum_{|f(p)|>1}\frac1p
} 
converge, then $f$ has a limiting distribution, in the sense that there is a distribution function $F:\SR\to[0,1]$ such that 
\[
F_x(u) := \frac{1}{\fl{x}} \{n\le x:f(n)\le u\}|  \to F(u) \quad\text{as}\quad x\to\infty
\]
for every $u\in\SR$ that is a point of continuity of $F$; the characteristic function of $F$ is given by 
\[
\hat{F}(\xi)=\prod_p\left\{\left(1-\frac1p\right) \sum_{k\ge0}\frac{e^{i\xi f(p^k)}}{p^k}\right\}.
\]
Conversely, if $f$ possesses a limiting distribution, then the three series in\ \eqref{series} converge.

One way to measure the regularity of the distribution of the set $\{f(n):n\in\SN\}$ is by its concentration. In general, given a distribution function $G:\SR\to[0,1]$, we define its concentration function to be 
\[
Q_G(\epsilon)=\sup_{u\in\SR}\{G(u+\epsilon)-G(u)\}.
\]
We seek estimates for $Q_{F_x}(\epsilon)$, or for $Q_F(\epsilon)$ if $f$ possesses a limiting distribution. There are various such results in the literature, a historic account of which is given in\ \cite{br_ten}. The most general estimate on $Q_{F_x}(\epsilon)$ is due to Ruzsa\ \cite{ruzsa}. Improving upon bounds due to Erd\H os\ \cite{erd1} and Hal\'asz\ \cite{halasz}, he showed that 
\eq{intro_e1}{
Q_{F_x}(1)
	\ll  \max_{\lambda\in\SR} \frac{1}{ \sqrt{\lambda^2 + 
		\sum_{p\le x} \min\{1,(f(p)-\lambda\log p)^2\}/p } }.
}
This result is best possible, as can be seen by taking $f(n)=c\log n$ or $f(n)=\omega(n)=\sum_{p|n}1$. However, both of these functions satisfy $f(p)\gg1$. So, a natural question is whether it is possible to improve upon\ \eqref{intro_e1} in the case that $f(p)$ decays to zero. Erd\H os and K\'atai\ \cite{ek}, building on earlier work of Tjan\ \cite{tjan} and Erd\H os\ \cite{erd2}, showed the following result:


\begin{theorem}[Erd\H os, K\'atai\ \cite{ek}]\label{ekthm}  
Let $f:\SN\to\SR$ be an additive function such that 
\[
\sum_{p>t^A}\frac{|f(p)|}p\ll\frac1t   \quad(t\ge1), 	
\quad
|f(p_1)-f(p_2)|\gg\frac1{p_2^B}   \quad(p_1,p_2\in\SP,\,p_1<p_2),
\] 
for some constants $A$ and $B$. Then
\[
Q_F(\epsilon) \asymp_{A,B} \frac1{\log(1/\epsilon)} \quad(0<\epsilon\le1/2);
\] 
except for $A$ and $B$, the implied constant depends on the implied constants in the assumptions of the theorem too.
\end{theorem}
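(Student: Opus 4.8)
The plan is to prove the two matching inequalities separately, working throughout with the probabilistic model. By the Erd\H os--Wintner theorem the limit distribution $F$ exists, since the decay hypothesis with $t=1$ gives $\sum_p|f(p)|/p\ll1$, whence also $\sum_pf(p)^2/p\le(\sup_p|f(p)|)\sum_p|f(p)|/p\ll1$. Thus $Q_F(\epsilon)=\lim_{x\to\infty}Q_{F_x}(\epsilon)$, and $F$ is the law of $\sum_pf(p)X_p$ when $f$ is strongly additive (the general case is handled in the same way, the prime-power contributions being absorbed into error terms by the decay hypothesis). For the \emph{lower} bound I would set $y=\epsilon^{-2A}$. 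The set $\{n:P^-(n)>y\}$ has density $\asymp1/\log y\asymp_A1/\log(1/\epsilon)$ by Mertens' theorem, and applying the decay hypothesis with $t=y^{1/A}=\epsilon^{-2}$ gives $\sum_{n\le x}\sum_{p\mid n,\,p>y}|f(p)|\le x\sum_{p>y}|f(p)|/p\ll x\epsilon^2$; by Markov's inequality all but an $o(1)$-proportion of the $n$ with $P^-(n)>y$ satisfy $|f(n)|\le\sum_{p\mid n}|f(p)|\le\epsilon$. Hence $F_x(\epsilon)-F_x(-\epsilon)\gg1/\log(1/\epsilon)$, and letting $x\to\infty$ yields $Q_F(2\epsilon)\gg1/\log(1/\epsilon)$; rescaling $\epsilon$ finishes this direction. (The spacing hypothesis is not needed here.)

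For the \emph{upper} bound $Q_F(\epsilon)\ll1/\log(1/\epsilon)$ I would use the classical inequality $Q_F(\epsilon)\ll\epsilon\int_{|\xi|\le1/\epsilon}|\hat F(\xi)|\,d\xi$ relating a concentration function to its characteristic function, together with the product formula for $\hat F$. Expanding the $p$-th factor, $\lvert(1-1/p)\sum_{k\ge0}e^{i\xi f(p^k)}p^{-k}\rvert^2\le1-\tfrac2p\bigl(1-\cos(\xi f(p))\bigr)+O(p^{-2})$, so taking logarithms and summing over $p$ gives
\[
|\hat F(\xi)|\ \ll\ \exp\Bigl(-\sum_p\tfrac1p\bigl(1-\cos(\xi f(p))\bigr)\Bigr).
\]
Everything then reduces to showing that, for $|\xi|\ge2$,
\[
\sum_p\frac1p\bigl(1-\cos(\xi f(p))\bigr)\ \ge\ \log\log|\xi|-C(A,B),
\]
for then $|\hat F(\xi)|\ll1/\log|\xi|$ and $\epsilon\int_{|\xi|\le1/\epsilon}|\hat F(\xi)|\,d\xi\ll\epsilon\bigl(1+\int_2^{1/\epsilon}d\xi/\log\xi\bigr)\ll1/\log(1/\epsilon)$, as required. (If one prefers, it suffices to establish this bound for almost all $\xi$ in each dyadic range $|\xi|\asymp\Xi$, since the exceptional set can be shown to have relative measure $\ll1/\log\Xi$ and $|\hat F|\le1$.)

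The crux --- and the step I expect to be hardest --- is precisely this lower bound for $\sum_pp^{-1}(1-\cos(\xi f(p)))$, which must carry the \emph{sharp} constant $1$ in front of $\log\log|\xi|$, with only an $O(1)$ error: a smaller constant, or an error of size $o(\log\log|\xi|)$, would yield only $Q_F(\epsilon)\ll(\log(1/\epsilon))^{-\alpha}$ for some $\alpha<1$. Obtaining a \emph{positive} constant is routine and uses only the decay hypothesis: it forces the primes $p$ for which $1-\cos(\xi f(p))\asymp1$ to reach up to roughly a fixed power of $|\xi|$, and $\sum1/p$ over that range is $\asymp\log\log|\xi|$. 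The real difficulty is the oscillatory regime (small $p$, where $|\xi f(p)|$ is large): there one needs $\cos(\xi f(p))$ to average to $o(1)$ over short ranges of primes --- equivalently, cancellation in the exponential sums $\sum_{p\sim P}e^{i\xi f(p)}$ for all relevant $P\ll|\xi|^{O(1)}$ --- and this is exactly where the spacing hypothesis $|f(p_1)-f(p_2)|\gg p_2^{-B}$ is essential, preventing the values $f(p)$ from clustering near a coset of $(2\pi/\xi)\mathbb{Z}$; for the ranges where pointwise cancellation is too delicate one argues in measure, bounding the set of exceptional $\xi$. Managing this estimate, tracking the dependence on the implied constants in the hypotheses, and dealing carefully with the prime-power terms and the reduction of $Q_F$ to the model distribution, is the bulk of the remaining work.
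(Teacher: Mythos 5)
Your lower-bound argument is essentially correct and elementary: fixing $y=\epsilon^{-2A}$, the integers $n\le x$ with $P^-(n)>y$ have proportion $\asymp_A 1/\log(1/\epsilon)$, the decay hypothesis with $t=\epsilon^{-2}$ gives $\sum_{p>y}|f(p)|/p\ll\epsilon^2$, and a first-moment/Markov argument shows almost all such $n$ (for $\epsilon$ small in terms of $A$ and the implied constants; the remaining range is trivial) satisfy $|f(n)|\le\epsilon$. This is in the same spirit as the paper's Theorem~\ref{main-lower}, though the paper instead invokes a ready-made result of La Bret\`eche--Tenenbaum; your version is more self-contained. One point you treat too casually: the hypotheses control only $f(p)$, not $f(p^k)$ for $k\ge2$, so the prime-power contribution cannot be absorbed ``by the decay hypothesis.'' Rather, one removes the square-full part of $n$ using the convergence of $\sum_{b\ \text{square-full}}1/b$, as the paper does explicitly in the decomposition $n=ab$.

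For the upper bound you propose a genuinely different route from the paper's. The paper proceeds by a purely elementary, sieve-flavoured count: it decomposes $n$ according to where the smallest ``large'' prime factor sits, applies a Brun--Titchmarsh-type bound for primes $p$ with $f(p)$ in a short window, and sums over dyadic scales (Lemma~\ref{main-upper}, motivated in the first paragraph of the proof of Theorem~\ref{ekc}). You instead propose the Esseen inequality $Q_F(\epsilon)\ll\epsilon\int_{|\xi|\le1/\epsilon}|\hat F(\xi)|\,d\xi$ combined with the bound $|\hat F(\xi)|\ll\exp\bigl(-\sum_p p^{-1}(1-\cos(\xi f(p)))\bigr)$. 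The plan is sound in outline, and you correctly identify that everything hinges on proving $\sum_p p^{-1}(1-\cos(\xi f(p)))\ge\log\log|\xi|-O_{A,B}(1)$ (or a version valid off a thin exceptional set of $\xi$), with the \emph{sharp} constant $1$.

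This is, however, a genuine gap and not a routine remainder. The estimate requires cancellation in $\sum_{p\le P}\cos(\xi f(p))/p$ uniformly over $P\ll|\xi|^{O(1)}$, which in turn requires the multiset $\{\xi f(p)\bmod 2\pi : p\sim P\}$ to equidistribute with an error that is summable over dyadic scales. The hypotheses are weak for this purpose: the spacing condition $|f(p_1)-f(p_2)|\gg p_2^{-B}$ is a one-sided separation, not a regularity or monotonicity condition, and the decay condition is an average bound, not a pointwise one; indeed the paper must first extract a good subset $\P=\{p:|f(p)|\le p^{-1/2A}\}$ (see the footnote after Theorem~\ref{main thm}) to get pointwise control. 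With so little structure, establishing the required equidistribution (or carrying out the ``in measure'' variant you allude to, and then controlling the exceptional $\xi$'s) is the entire content of the upper bound, and it is left here as a sketch of a plan rather than a proof. By contrast, the paper's elementary decomposition avoids Fourier analysis entirely and requires only Brun--Titchmarsh and Mertens; it is worth understanding why it succeeds with the sharp constant $1$ without ever needing an equidistribution input, which your route cannot sidestep.
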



\noindent On the other hand, when $f(p)\ll1/p^\delta$, $p\in\SP$, for some $\delta>0$, then \eqref{intro_e1} applied to $f/\epsilon$ yields an upper bound for $Q_F(\epsilon)$ that is never better than $1/\sqrt{\log\log(1/\epsilon)}$, as can be seen by taking $\lambda=0$.

Also, Erd\H os and K\'atai studied $Q_F(\epsilon)$ in the case that $f(p)=(\log p)^{-c}$, $p\in\SP$, for some $c\ge1$. They showed that 
\eq{eke almost}{
\begin{cases}
\epsilon^{1/c} \ll_c Q_F(\epsilon) \ll_c\epsilon^{1/c} \log\log^2(1/\epsilon)&\text{if}~c>1,\cr 
\epsilon \ll Q_F(\epsilon) \ll \epsilon\log(1/\epsilon) \log\log ^2(1/\epsilon) & \text{if}~c=1,
\end{cases}
}
for $0<\epsilon\le1/3$. Furthermore, they conjectured that, for fixed every $c>1$, we have that
\[
Q_F(\epsilon)\asymp_c\epsilon^{1/c}   \quad(0<\epsilon\le1).
\]

The conjecture of Erd\H os and K\'atai was proven for $c$ large enough by La Bret\`eche and Tenenbaum in\ \cite{br_ten}:


\begin{theorem}[La Bret\`eche, Tenenbaum\ \cite{br_ten}]\label{bt} Let $c\ge1$ and $f:\SN\to\SR$ be an additive function such that $|f(p)|\asymp(\log p)^{-c}$ for every $p\in\SP$ and 
\[
|f(p_1)-f(p_2)| \gg  \frac{p_2-p_1}{p_2(\log p_2)^{c+1}}  \quad(p_1,p_2\in\SP,\,p_1<p_2).
\]
If $c$ is large enough, then we have that 
\[
Q_F(\epsilon) \asymp \epsilon^{1/c}\quad(0<\epsilon\le1);
\]
the implied constant depends at most on the implied constants in the assumptions of the theorem.
\end{theorem}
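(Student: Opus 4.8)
The plan is to establish the two inequalities $Q_F(\epsilon)\gg\epsilon^{1/c}$ and $Q_F(\epsilon)\ll\epsilon^{1/c}$ separately. The lower bound is the easy half: since $|f(p)|\asymp(\log p)^{-c}$, a prime $p$ with $\log p\asymp\epsilon^{-1/c}$ contributes a value of size $\asymp\epsilon$ to $f$, so $F$ has jumps (or near-jumps) of size roughly $1/p\asymp\exp(-\epsilon^{-1/c})$... no — more carefully, the point is that $f(n)$ and $f(pn)$ differ by $f(p)\asymp\epsilon$, and the density of $n$ with a fixed small set of prime factors is positive, so intervals of length $\epsilon$ catch a positive proportion; in fact the cleanest route is to note $Q_F(\epsilon)\ge F(u+\epsilon)-F(u^-)\ge \text{(mass of the atom or near-atom at a suitable }u)$, and one shows there is a value $u$ hit with probability $\gg\epsilon^{1/c}$ by considering $n$ composed only of primes $p\le P$ with $P=\exp(c_0\epsilon^{-1/c})$: the number of such $n\le x$ is $\gg x/\log x \cdot (\log P) = x\,\epsilon^{-1/c}/\log x$... this is the standard Erd\H os-type construction already giving \eqref{eke almost}, so the lower bound $\epsilon^{1/c}\ll Q_F(\epsilon)$ I would simply quote or reproduce in a line.

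The substance is the upper bound $Q_F(\epsilon)\ll\epsilon^{1/c}$, and here the plan follows the Fourier-analytic / Halász–Ruzsa philosophy but exploits the decay and spacing of $f(p)$ in a sharper way than \eqref{intro_e1} permits. I would write $Q_F(\epsilon)\ll \epsilon\int_{-1/\epsilon}^{1/\epsilon}|\hat F(\xi)|\,d\xi$ (a standard smoothing inequality, e.g.\ via a Fej\'er-type kernel), and then use the product formula for $\hat F$ to get $|\hat F(\xi)|\le\prod_p\bigl|(1-1/p)\sum_{k\ge0}e^{i\xi f(p^k)}/p^k\bigr|\le\exp\bigl(-\tfrac12\sum_p \tfrac1p\|\xi f(p)/(2\pi)\|^2\cdot\text{const}\bigr)$ up to lower-order terms, where $\|\cdot\|$ denotes distance to the nearest integer. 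For $|\xi|\le 1/\epsilon$ and $p$ in the range $\exp(|\xi|^{1/c})\le p\le \exp((2|\xi|)^{1/c})$ we have $\xi f(p)\asymp \xi(\log p)^{-c}\asymp 1$, so $\|\xi f(p)/(2\pi)\|\gg1$ for a positive proportion of such $p$ unless the $\xi f(p)$ cluster near multiples of $2\pi$ — and this is exactly what the spacing hypothesis $|f(p_1)-f(p_2)|\gg (p_2-p_1)/(p_2(\log p_2)^{c+1})$ rules out, because consecutive primes in that dyadic-in-$\log$ window are spaced so that the values $\xi f(p)$ move by $\gg \xi\cdot\tfrac{\log p}{p(\log p)^{c+1}}\cdot p \asymp 1$ between nearby primes (using the prime number theorem for the count of primes in the window). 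Summing $1/p$ over this window gives $\asymp\log\log$ of the endpoints ratio $\asymp 1$, hence a fixed saving, but to get the full strength one iterates over all dyadic windows with $p$ up to roughly $\exp((2|\xi|)^{1/c})$: there are $\asymp\log|\xi|$ such windows... wait, that would give $|\hat F(\xi)|\le|\xi|^{-\delta}$ which after integration gives only $\epsilon^{\delta}$ for some small $\delta$, not $\epsilon^{1/c}$. So the key realization must be different: the dominant saving comes not from many windows but from the \emph{single} window where $\xi(\log p)^{-c}\asymp 1$, i.e.\ $\log p\asymp|\xi|^{1/c}$, which has length (in $\log p$) comparable to $|\xi|^{1/c}$ and hence contributes $\sum 1/p\asymp \log\log p \sim \tfrac1c\log|\xi|$ — no. Let me reconsider: actually one should also use primes with $\log p$ somewhat smaller than $|\xi|^{1/c}$, where $\xi f(p)$ is large; there $\|\xi f(p)\|$ is roughly equidistributed by the spacing condition, giving $\sum_{\log p\le |\xi|^{1/c}} \tfrac1p\|\xi f(p)/2\pi\|^2\gg \sum_{\log p\le|\xi|^{1/c}}\tfrac1p\asymp \log(|\xi|^{1/c})=\tfrac1c\log|\xi|$, so $|\hat F(\xi)|\ll|\xi|^{-1/(2c)+o(1)}$ — still not enough, off by a factor of $2$ in the exponent and with the wrong shape of the final bound.

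I expect the actual argument to be more delicate: rather than bounding $|\hat F(\xi)|$ pointwise and integrating, one should bound the \emph{integral} $\int_{|\xi|\le 1/\epsilon}|\hat F(\xi)|d\xi$ by a smarter argument that localizes $\xi$ to dyadic ranges $|\xi|\asymp T$ and, on each, uses that the main oscillatory content of $\hat F$ comes from primes with $\log p \asymp T^{1/c}$, producing effectively a sum of independent contributions whose combined distribution has density bounded by $O(T^{1/c})$ near any point — this is really the dual/physical-space statement that the partial sum $\sum_{\log p\le T^{1/c}} f(p)X_p$, after the near-continuum of scales $(\log p)^{-c}$, has a density bounded by $O(T^{1/c})$, uniformly, whence $Q_F(\epsilon)\ll\epsilon^{1/c}$ by choosing $T\asymp\epsilon^{-1}$. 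The technical heart, and the main obstacle, is therefore proving a uniform density bound of the form: the random variable $\sum_{p\le \exp(T^{1/c})}f(p)X_p$ has a distribution whose concentration function at scale $1/T$ is $\ll T^{1/c-1}\cdot(\text{something})$ — equivalently handling the Fourier integral over $|\xi|\le T$ with the sharp constant. The spacing hypothesis is used precisely to ensure that among primes with $\log p\in[y,2y]$ the values $f(p)$ are $\gg 1/(\text{count})$-separated relative to the scale $(\log p)^{-c}$, which lets one treat the partial Bernoulli sum over such primes as genuinely ``spread out'' (a Sárközy–Szemerédi / Halász-type inequality for sums of Bernoulli variables with well-separated weights), and the decay hypothesis $|f(p)|\asymp(\log p)^{-c}$ ensures the tail primes contribute a convergent, negligible amount and that the relevant scale is exactly $\log p\asymp\epsilon^{-1/c}$. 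I would organize the write-up as: (i) reduce $Q_F(\epsilon)$ to a Fourier integral; (ii) a lemma bounding the characteristic function of a Bernoulli sum with separated weights on a dyadic window (the crux); (iii) sum the window estimates with the decay hypothesis controlling the range and the error; (iv) optimize to get $\epsilon^{1/c}$; (v) the short lower-bound construction. The requirement ``$c$ large enough'' in the hypothesis suggests that step (iii)—summing the per-window losses without them overwhelming the target exponent $1/c$—only closes when $c$ exceeds an absolute constant, which is the honest limitation of this method and the reason the full conjecture for all $c>1$ requires the new ideas of the present paper.
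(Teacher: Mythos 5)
Your proposal is not a proof: you candidly identify the obstruction yourself midway through, and the missing piece you label ``the crux'' is in fact logically equivalent to the theorem you are trying to prove, so the write-up never closes.

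Concretely, the route $Q_F(\epsilon)\ll\epsilon\int_{|\xi|\le 1/\epsilon}|\hat F(\xi)|\,d\xi$ combined with a pointwise bound of the form $|\hat F(\xi)|\le\exp\bigl(-c'\sum_p\frac1p\|\xi f(p)/2\pi\|^2\bigr)$ has an unavoidable $L^2$-loss: the squared fractional part $\|\cdot\|^2$, coming from $|1-e^{i\theta}|^2\asymp\|\theta/2\pi\|^2$, is precisely the source of the $\sqrt{\phantom{x}}$ in Ruzsa's general bound\ \eqref{intro_e1}. As you compute, even with the spacing hypothesis used optimally this gives $|\hat F(\xi)|\ll|\xi|^{-1/(2c)+o(1)}$ and hence at best $Q_F(\epsilon)\ll\epsilon^{1/(2c)}$ — the exponent is off by a factor of two, and no amount of dyadic bookkeeping repairs this. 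Your attempted fix — bound the integral $\int|\hat F|$ directly via a ``uniform density bound'' for the truncated Bernoulli sum $\sum_{\log p\le T^{1/c}}f(p)X_p$ — is not a simplification: such a density bound at scale $1/T$ with the exponent $T^{1/c-1}$ \emph{is} the statement $Q_F(\epsilon)\ll\epsilon^{1/c}$ (choose $T=1/\epsilon$, and the tail primes contribute $O(\epsilon)$), so you have re-derived the problem, not solved it. Your guess that ``$c$ large enough'' reflects accumulated per-window losses is plausible as speculation but is not what happens in the actual La Bret\`eche--Tenenbaum argument, which is not Fourier-analytic at all: their method runs through a BMO (bounded mean oscillation) estimate, as the introduction of the present paper notes.

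The paper deduces Theorem\ \ref{bt} as a special case of Theorem\ \ref{main thm}, and the upper bound there is proven by a method that is entirely elementary and physical-space: one works with the measure $\mathcal{F}_y$ directly, decomposes $n=ab$ (squarefree times squarefull), then partitions the squarefree part according to the location of the smallest prime factor above a cutoff $q_J$, and on each piece isolates a single ``pivot'' prime $p$ whose contribution is controlled by Brun--Titchmarsh while the remaining factors are handled by Mertens-type sums. The crucial structural gain over the Fourier approach is twofold: the Brun--Titchmarsh bound for primes in a short interval gives the sharp $\epsilon/\delta$ without any $L^2$ loss, and there is an additional saving (the constraint $|f(a_2)-t|\le 2^{j+1}\delta$ in \eqref{e8}) that is invisible on the Fourier side. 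Summing over the dyadic scales then produces a geometric series whose ratio is $2^{1/c-1}$, which converges for every $c>1$ — this is precisely how the paper removes the ``$c$ large enough'' hypothesis and proves the full Erd\H os--K\'atai conjecture, something the characteristic-function method cannot do. If you want to complete the proof, the concrete step is to abandon the Fourier reduction and adopt the decomposition of Lemma\ \ref{main-upper}.
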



La Bret\`eche and Tenenbaum derived their theorem from a general upper bound on $Q_F(\epsilon)$ that they showed when the sequence of prime values of $f$ satisfies certain regularity assumptions. Their method uses a result from the theory of functions of \textit{Bounded Mean Oscillation}, first introduced by Diamond and Rhoads\ \cite{dr} in this context to study the concentration of $f(n)=\log(\phi(n)/n)$.

In this paper we give a proof of the full Erd\H os-K\'atai conjecture using a more elementary method, similar to the ones in\ \cite{erd2,ek}:


\begin{theorem}\label{ekc} Let $c\ge1$ and $f:\SN\to\SR$ be an additive function with $f(p)=(\log p)^{-c}$ for all $p\in\SP$. For $0<\epsilon\le1/2$ we have that 
\[
\epsilon^{1/c}\ll Q_F(\epsilon)\ll\min\left\{\frac{c}{c-1},\log\frac1\epsilon\right\}\epsilon^{1/c}.
\]
\end{theorem}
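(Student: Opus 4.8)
The plan is to treat the lower and upper bounds separately, and to reduce everything to understanding the distribution of the random sum $\sum_p f(p) X_p = \sum_p (\log p)^{-c} X_p$, whose distribution function is exactly $F$ by the Erd\H os--Wintner theorem (the three series in \eqref{series} converge trivially since $f(p)\asymp(\log p)^{-c}$ and $\sum_p 1/(p(\log p)^{2c})<\infty$). For the \emph{lower bound} $Q_F(\epsilon)\gg\epsilon^{1/c}$, I would exhibit an interval of length $\epsilon$ carrying probability $\gg\epsilon^{1/c}$. Fix the integer $k=k(\epsilon)$ determined by $(\log p)^{-c}\approx\epsilon$ for $p$ near $\exp(\epsilon^{-1/c})$; more precisely, let $y=\exp((2/\epsilon)^{1/c})$ or similar, so that every prime $p>y$ has $f(p)=(\log p)^{-c}<\epsilon/2$. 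Conditioning on the event that $X_p=0$ for all $p\le y$ (which has probability $\prod_{p\le y}(1-1/p)\asymp 1/\log y\asymp\epsilon^{1/c}$) and on $X_p=0$ for the finitely many primes in a suitable dyadic-type window just above $y$, one forces $\sum_p f(p)X_p$ into a short interval near $0$; a second-moment or direct tail estimate shows the residual sum $\sum_{p>y'}f(p)X_p$ has size $\ll\epsilon$ with probability bounded below, giving the claim. The point $u$ realizing the supremum in $Q_F$ will be essentially $0$.

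For the \emph{upper bound}, the governing principle is that concentration of $\sum_p f(p)X_p$ in an interval of length $\epsilon$ forces most of the "action" to come from primes $p$ with $f(p)=(\log p)^{-c}\le C\epsilon$, i.e.\ $p\ge\exp((C\epsilon)^{-1/c})=:z$. Indeed, if any prime $p_0<z$ had $X_{p_0}=1$ with the complementary configuration fixed, the value of $f$ would jump by $(\log p_0)^{-c}>C\epsilon$, so to stay in a fixed interval $(u,u+\epsilon]$ we must essentially have $X_p=0$ for all $p<z$ — more carefully, one splits $\sum_{p<z}f(p)X_p$ and notes that the possible values of this finite partial sum are spread out enough (this is where the monotone decay and the spacing of $(\log p)^{-c}$ enter) that only a proportion $\ll \prod_{p<z}(1-1/p) + (\text{small correction})\asymp 1/\log z\asymp\epsilon^{1/c}$ of the mass can land in any window of length $\epsilon$. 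This yields $Q_F(\epsilon)\ll\epsilon^{1/c}$ up to the factor $\min\{c/(c-1),\log(1/\epsilon)\}$, which should emerge from the accounting of how many primes below $z$ one must control and how finely their contributions subdivide — when $c$ is close to $1$ the values $(\log p)^{-c}$ decay slowly, the partial sums over $p<z$ pile up, and one loses a factor $\log(1/\epsilon)$; when $c$ is bounded away from $1$ a geometric-series argument recovers the constant $c/(c-1)$.

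Concretely, I would carry out the upper bound by the convolution/telescoping identity: writing $S=\sum_{p<z}f(p)X_p$ and $T=\sum_{p\ge z}f(p)X_p$, independence gives $F(u+\epsilon)-F(u)=\sum_{s}\prob(S=s)\,\prob(T\in(u-s,u-s+\epsilon])$, and one bounds the $T$-probabilities uniformly. Here $T$ is a sum of independent variables each of size $\le C\epsilon$, and a Kolmogorov--Rogozin type inequality (or the Lévy concentration bound, cf.\ \eqref{intro_e1} applied to $f/\epsilon$ on the range $p\ge z$) gives $\sup_v\prob(T\in(v,v+\epsilon])\ll(\sum_{p\ge z}\min\{1,(f(p)/\epsilon)^2\}/p)^{-1/2}$; since $f(p)/\epsilon\le 1$ for $p\ge z$ this is $\ll(\sum_{p\ge z}(\log p)^{-2c}/(p\epsilon^2))^{-1/2}$, and evaluating the sum by partial summation against $\pi(t)\sim t/\log t$ shows it is of order $1$ (the exponent $2c>2$ makes the sum convergent, contributing its tail near $z$, which is $\asymp \epsilon^2/(\log z)\cdot\log z=\epsilon^2$-ish — this needs to be checked but should give exactly the right power). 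Then $\sum_s\prob(S=s)=1$ finishes it, except that this naive bound only gives $Q_F(\epsilon)\ll 1$; to get the extra $\epsilon^{1/c}$ one must instead observe that $\prob(S=0)\asymp\prod_{p<z}(1-1/p)\asymp 1/\log z\asymp\epsilon^{1/c}$ and that the \emph{nonzero} values of $S$ are rare \emph{and} force $T$ into a shifted window where the same concentration bound applies with a gain.

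The main obstacle, I expect, is precisely this last point: extracting the full factor $\epsilon^{1/c}$ rather than merely $1$. One has to show that the "cost" of having some small prime $p<z$ divide $n$ is paid in full — either the event $X_p=1$ is itself rare (factor $1/p$), or, aggregated over all $p<z$, the partial sums $S$ are sufficiently dispersed that no length-$\epsilon$ window catches more than an $\epsilon^{1/c}$-fraction. This dispersion is governed by the minimal gaps between distinct subset-sums of $\{(\log p)^{-c}:p<z\}$, and controlling it in the regime $c\to1^+$ is exactly what produces the $\min\{c/(c-1),\log(1/\epsilon)\}$ loss; I anticipate this is where most of the technical work in \cite{erd2,ek}-style arguments lives, and where a careful induction on the primes below $z$ (adding them one at a time and tracking how each convolution spreads the mass) will be needed.
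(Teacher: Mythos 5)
Your framework is sound in spirit --- the split into small primes ($p<z$, giving the factor $\prod_{p<z}(1-1/p)\asymp 1/\log z\asymp\epsilon^{1/c}$) and large primes ($p\ge z$, giving $O(1)$ via concentration inequalities) is the right starting point, and you correctly identify that the entire difficulty sits in the last paragraph: how to show that the nonzero values of $S=\sum_{p<z}f(p)X_p$ contribute only a factor $\min\{c/(c-1),\log(1/\epsilon)\}$. But this is not a small gap to be filled in; it \emph{is} the theorem, and you leave it unsolved, even signalling that you would try to attack it via the spacing of subset-sums of $\{(\log p)^{-c}:p<z\}$. That route is genuinely harder than what is needed, and I don't see how it closes: subset-sums of $(\log p)^{-c}$ over all $p<z$ do cluster badly (already two-element sums $f(p_1)+f(p_2)$ can nearly coincide with a single $f(p_3)$), and you have no handle on them. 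The paper's proof sidesteps subset-sums entirely. The mechanism is a telescoping decomposition by the location of the smallest prime factor above a moving threshold: one writes $n=apb$ with $P^+(a)\le q_J<p<P^-(b)$ and $q_j<p\le q_{j-1}$ for dyadic thresholds $q_j=\exp((2^{j+1}\epsilon)^{-1/c})$, and the window constraint $v<f(n)\le v+\epsilon$ then pins down $p$ (for fixed $a,b$) to an interval $I$ of double-logarithmic length $\ll\epsilon/(2^j\epsilon)$, which is exactly what the Brun--Titchmarsch inequality can estimate. Iterating this ``peel off the smallest prime factor'' step once more inside the $a$-sum is what produces, after the dyadic summation over $j$, the geometric series $\sum_j 2^{-j(1-1/c)}$ whose value is $\min\{c/(c-1),\log(1/\epsilon)\}$. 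No spacing of subset-sums is ever required --- only a uniform bound $\sum_{p\in I}1/p\ll\epsilon/\delta + (\log z)^{-2}$ for $I$ a short interval of primes at a given logarithmic scale.

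Two further points worth noting. First, your Rogozin/L\'evy bound on $T$ is fine as far as it goes and does give $\sup_v\prob(T\in(v,v+\epsilon])\ll 1$, but this is already built into the paper's argument (it corresponds to the trivial bound on the $b$- and $a_2$-sums); the extra factor $\epsilon^{1/c}$ and the $\min\{c/(c-1),\log(1/\epsilon)\}$ loss both come from the $S$-side. Second, for the lower bound, after conditioning on $X_p=0$ for $p\le y$ you still need to show that the tail $\sum_{p>y}f(p)X_p$ lands in a fixed length-$\epsilon$ window with probability bounded below --- and the natural way to do this (a second-moment bound, or equivalently $\sum_{p>y}|f(p)|/p\ll\epsilon$) is precisely Theorem~1.2 of de la Bret\`eche--Tenenbaum as invoked in the paper, so that part of your plan is on track, just unwritten. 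The upper bound, however, has a genuine gap at the step you flag.
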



\begin{rk}\label{rk1} 
When $0<c<1$, the behavior of $Q_F$ for $f$ as in Theorem\ \ref{ekc} is different. As G\'erald Tenenbaum has pointed out to us in a private communication, in this case we have that 
\eq{c<1}{
Q_F(\epsilon) \asymp_c \epsilon   \quad(0<\epsilon\le1).
} 
Corollary\ \ref{ekc} and relation\ \eqref{c<1} give the concentration of an additive function $f$ with $f(p)=(\log p)^{-c}$, $p\in\SP$, for all positive values of $c$ except for $c=1$, which is the only case remaining open.
\end{rk}


We will prove Theorem\ \ref{ekc} in Section\ \ref{ekc proof}. The method of its proof is quite flexible; in particular, it leads to a strengthening of Theorems\ \ref{ekthm} and\ \ref{bt}. We phrase our more general result in terms of the distribution function 
\[
\mathcal{F}_y(u)
	= \prod_{p\le y}\left(1-\frac1p\right) \sum_{\substack{p|n\Rightarrow p\le y\\f(n)\le u}} \frac1n \quad  (u\in\SR),
\]
defined for every $y\ge1$. From a technical point of view, this function is more natural to work with than $F_x$. Indeed, a calculation of the characteristic function of $\mathcal{F}_y$ immediately implies that $\mathcal{F}_y$ converges to $F$ weakly, provided that the latter is well defined. It is relatively easy to pass from estimates for $Q_{\mathcal{F}_y}(\epsilon)$ to estimates for $Q_{F_x}(\epsilon)$. 

With this notation, we have the following result (observe that by letting $y\to\infty$ in it, we deduce as special cases\footnote{To deduce Theorem\ \ref{ekthm}, take $\P=\{p\in\SP:|f(p)|\le p^{-1/(2A)}\}$.} Theorems\ \ref{ekthm} and\ \ref{bt}):


\begin{theorem}\label{main thm} Consider an additive function $f:\SN\to\SR$ for which there is a set of primes $\P$ and a constant $c\in[1,2]$ such that 
\[
|f(p)| \ll \frac1{(\log p)^c}\quad(p\in\P),
\quad\text{and}\quad
\sum_{p\in\SP\setminus\P}\frac1p\ll1.
\]
For $t\ge2$ set 
\[
g(t) = \frac{\sup\{|f(p)|(\log p)^c:p\ge t,\,p\in\P\}}{(\log t)^c}
\]
and assume that there is some $A\ge1$ such that 
\[
|f(p_2)-f(p_1)| \gg \min\left\{\frac{g(p_2)(p_2-p_1)}{p_2\log p_2}, g\left(p_2^A\right)\right\}
\quad(p_1,p_2\in\P,\,p_1<p_2).
\]
Then for $0<\epsilon\le1/2$ and $y\ge K(\epsilon)$, where $K(\epsilon)=\min \{ p\in\SP : g(p) \le \epsilon\}$, we have that 
\[
\frac1{\log K(\epsilon)} 
	\ll Q_{\mathcal{F}_y}(\epsilon)
	\ll_A \frac{\min\left\{1/(c-1), \log(1/\epsilon)  \right\}}{\log K(\epsilon)};
\]
except for $A$, the implied constants depends on the implied constants in the assumptions of the theorem too.
\end{theorem}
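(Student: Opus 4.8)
The plan is to split the argument into a lower bound and an upper bound for $Q_{\mathcal{F}_y}(\epsilon)$, treating the additive function through its strongly additive truncation to primes in $\P$ (the contribution of $\SP\setminus\P$ being absorbed into the implied constants, since $\sum_{p\notin\P}1/p\ll1$). For the lower bound $Q_{\mathcal{F}_y}(\epsilon)\gg 1/\log K(\epsilon)$, I would exhibit a short interval of length $\epsilon$ that captures a positive proportion (of size $\gg 1/\log K(\epsilon)$) of the mass of $\mathcal{F}_y$. The natural choice is to look at integers $n$ all of whose prime factors are small, say $p\mid n\Rightarrow p\le K(\epsilon)$: for such $n$, $f(n)$ is a sum of at most $O(\log K(\epsilon)/\log 2)$ terms, each of size $O((\log p)^{-c})=O(1)$, and by the Mertens-type estimate $\prod_{p\le K(\epsilon)}(1-1/p)^{-1}\asymp\log K(\epsilon)$ the total $\mathcal{F}_y$-mass of the event ``$p\mid n\Rightarrow p\le K(\epsilon)$, $n$ squarefree, $\Omega(n)\le 1$'' (or some similarly restricted event) is $\gg1/\log K(\epsilon)$; pigeonholing the values of $f$ on this set into $O(1)$-many intervals of length $\epsilon$ gives one interval with mass $\gg1/\log K(\epsilon)$. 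One must be a little careful to choose the restricting conditions so that $f(n)$ actually varies over a bounded range, but this is routine.

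For the upper bound, the strategy mirrors \cite{erd2,ek}: fix $u\in\SR$ and bound the $\mathcal{F}_y$-mass of $\{n:u<f(n)\le u+\epsilon\}$. Write each $n$ (with $p\mid n\Rightarrow p\le y$) as $n=mr$ where $m$ is composed of primes $<K(\epsilon)^{1/A}$ and $r$ of primes in $[K(\epsilon)^{1/A},y]$. For the ``large-prime'' part $r$ the key point is the spacing hypothesis: if $r$ has any prime factor $p_2\ge K(\epsilon)^{1/A}$, then $g(p_2^A)\le g(K(\epsilon))\le\epsilon$ (by minimality of $K(\epsilon)$ and monotonicity of $g$), while if $r$ also has another prime factor $p_1<p_2$ close to $p_2$ the separation $|f(p_2)-f(p_1)|\gg \min\{g(p_2)(p_2-p_1)/(p_2\log p_2),g(p_2^A)\}$ forces either a sizable gap or essentially fixes $p_2$ given its neighbours; this is precisely the mechanism by which the spacing condition prevents $f$ from clustering many values in an interval of length $\epsilon$. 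So I would argue that, given the prime factorization of $m$ and all but the largest prime of $r$, the largest prime $p_2$ of $r$ is constrained to lie in an interval of multiplicative length $O_A(1)$ (coming from inverting the bound $|f(p_2)-f(p_1)|\le\epsilon$ together with $f(p_2)\asymp g(p_2)(\log p_2)^c/(\log p_2)^c$ near $\epsilon$). Summing $1/n$ over the admissible $n$ then produces, by Mertens again, a factor $\log K(\epsilon)^{1/A}\asymp_A\log K(\epsilon)$ from the small primes in $m$, but crucially only $O(1/(c-1))$ or $O(\log(1/\epsilon))$ worth of contribution from the constrained large primes (the $1/(c-1)$ being the convergence rate of $\sum_p (\log p)^{-c}/p$ restricted to $p\ge K(\epsilon)^{1/A}$, and $\log(1/\epsilon)$ the trivial bound on the number of available dyadic ranges). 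Dividing by the normalization $\prod_{p\le y}(1-1/p)^{-1}\asymp\log y$ — here one uses $y\ge K(\epsilon)^{1/A}$ to compare $\log y$ with $\log K(\epsilon)$ — gives the claimed bound $\ll_A \min\{1/(c-1),\log(1/\epsilon)\}/\log K(\epsilon)$.

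The main obstacle, and the heart of the argument, is the combinatorial/analytic handling of the large-prime part $r$: one must show that the spacing hypothesis, which only compares $f(p_1)$ and $f(p_2)$ \emph{pairwise}, still forces global sparsity of the value set $\{f(r):r\text{ made of large primes}\}$ within any window of length $\epsilon$. This requires organizing $r$ by its number of prime factors $\Omega(r)$, peeling off the largest prime at each stage, and tracking how the error $\epsilon$ propagates — the bound on the window for $p_2$ must not deteriorate as $\Omega(r)$ grows, which is why one needs the function $g$ (rather than a crude bound $|f(p)|\ll(\log p)^{-c}$) and the slightly awkward $\min\{\cdots,g(p_2^A)\}$ form of the spacing condition: the second term $g(p_2^A)$ is exactly what survives when $p_2$ is as large as $y$. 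Once this sparsity is quantified, the rest is summation of $1/n$ against Mertens-type asymptotics, and the interplay between $c\in[1,2]$ (governing the $1/(c-1)$ term) and the size of $\epsilon$ (governing the $\log(1/\epsilon)$ alternative) is bookkeeping. Finally, to recover Theorems \ref{ekthm} and \ref{bt} one lets $y\to\infty$ so that $\mathcal{F}_y\to F$ weakly, checks that $\log K(\epsilon)\asymp\log(1/\epsilon)$ under their hypotheses (with the choice $\P=\{p:|f(p)|\le p^{-1/2A}\}$ for Theorem \ref{ekthm}), and verifies the stated spacing conditions imply the one above.
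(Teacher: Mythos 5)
Your proposal has the roles of the small and large primes reversed in both directions of the bound, and this is not a cosmetic issue.

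For the lower bound, you propose to restrict to integers $n$ with $p\mid n\Rightarrow p\le K(\epsilon)$. But the $\mathcal{F}_y$-mass of this event is
\[
\prod_{p\le y}\Bigl(1-\frac1p\Bigr)\sum_{p\mid n\Rightarrow p\le K(\epsilon)}\frac1n
=\prod_{K(\epsilon)<p\le y}\Bigl(1-\frac1p\Bigr)\asymp\frac{\log K(\epsilon)}{\log y},
\]
which for $y$ much larger than $K(\epsilon)$ is far smaller than $1/\log K(\epsilon)$ (and on this set $f$ has essentially unbounded range, so pigeonholing would lose a further factor of $\epsilon$). The correct set is $\{n:p\mid n\Rightarrow p>K(\epsilon)\}$, whose mass is $\prod_{p\le K(\epsilon)}(1-1/p)\asymp1/\log K(\epsilon)$; on this set the mean of $|f(n)|$ is $\le\sum_{p>K(\epsilon),\,p\in\P}|f(p)|/p\ll\epsilon$, so a fixed interval of length $O(\epsilon)$ captures a constant fraction of its mass. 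This is exactly what the paper gets by quoting Theorem 1.2 of \cite{br_ten} (Theorem~\ref{main-lower} of the paper), rescaling the interval, and removing the primes in $\SP\setminus\P$.

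For the upper bound, you claim the small primes in $m$ contribute a free Mertens factor $\log K(\epsilon)^{1/A}$ while the constrained large primes in $r$ contribute $\min\{1/(c-1),\log(1/\epsilon)\}$. Running the arithmetic, this gives $Q_{\mathcal{F}_y}(\epsilon)\ll\log K(\epsilon)\cdot\min\{1/(c-1),\log(1/\epsilon)\}/\log y$, which is not $\ll\min\{1/(c-1),\log(1/\epsilon)\}/\log K(\epsilon)$ unless $\log y\gg(\log K(\epsilon))^2$, an assumption that is not available. The accounting must go the other way: for $p>q_0\approx K(\epsilon)^{1/A}$ one has $|f(p)|\ll\epsilon$, so those primes cannot help constrain $f(n)$ at scale $\epsilon$ and are summed freely, producing $\asymp\log y/\log q_0$, which is absorbed by the normalization $\prod_{p\le y}(1-1/p)\asymp1/\log y$. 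The hard constraint sits entirely on the primes $\le q_0$, and the whole point of the paper's Lemma~\ref{main-upper} is to show that the constrained sum over squarefree $a_1\in\P(1,q_0)$ with $v<f(a_1)\le v+\epsilon$ is $\ll\min\{1/(c-1),\log(1/\epsilon)\}$.

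Related to this, your statement that given the rest of $n$ the largest prime $p_2$ of $r$ ``is constrained to lie in an interval of multiplicative length $O_A(1)$'' does not follow from the pairwise spacing hypothesis and is not what one needs. The spacing condition gives a bound on the \emph{density} $\sum_{p\in\P\cap(z,w],\,u<f(p)\le u+\epsilon}1/p$ (the paper's condition \eqref{c2}, established via Brun--Titchmarsch together with the spacing), not a localization of $p$ to a single short interval; and, crucially, the paper applies this estimate not to the largest prime of the large-prime part, but to the \emph{smallest} prime $>q_J$ in $n$, bins that prime into dyadic ranges $q_j<p\le q_{j-1}$, and extracts an additional saving from the fact that $f$ of the remaining very small part must then lie in a window of length $O(2^j\delta)$ rather than $\epsilon$. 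That double-constraint mechanism, together with the bookkeeping giving $\min\{1/(c-1),\log(1/\epsilon)\}$, is the heart of the proof, and your sketch does not identify it.

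The remarks at the end about letting $y\to\infty$ and deducing Theorems~\ref{ekthm} and~\ref{bt} are on the right track, but the core of both bounds needs to be redone with the roles of the two prime ranges interchanged.
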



As an immediate corollary, we deduce the following simpler to state result.


\begin{cor}\label{main cor} Let $f:\SN\to\SR$ be an additive function for which there is a constant $c\in[1,2]$ such that the sequence $\{|f(p)|(\log p)^c:p\ \text{prime}\}$ is decreasing. Then for $0<\epsilon\le1/2$ and $y\ge K(\epsilon)$, where $K(\epsilon)=\min \{ p\in\SP:   |f(p)| \le \epsilon\}$, we have that 
\[
\frac1{\log K(\epsilon)} 
	\ll Q_{\mathcal{F}_y}(\epsilon)
	\ll \frac{\min\left\{1/(c-1), \log(1/\epsilon)  \right\}}{\log K(\epsilon)} .
\]
\end{cor}


\begin{proof} If $g$ is as in the statement of Theorem \ref{main thm} with $\P = \SP$, then we see immediately that $g(p)=|f(p)|$, for all $p\in\SP$. Moreover, if $p_1<p_2$ are two primes, then we have that
\als{
\frac{|f(p_2)-f(p_1)|}{|f(p_2)|}
	\ge \frac{|f(p_1)|}{|f(p_2)|}  - 1 
	\ge \frac{(\log p_2)^c}{(\log p_1)^c} - 1 
	&= \frac{ (\log p_2)^c - (\log p_1)^c}{(\log p_1)^c} \\
	&\ge \frac{c(p_2-p_1)(\log p_1)^{c-1}}{p_2(\log p_1)^c} \\
	&\ge \frac{p_2-p_1}{p_2\log p_2} ,
}
by our assumption that $\{|f(p)|(\log p)^c:p\ \text{prime}\}$ is decreasing and the Mean Value Theorem. So Theorem \ref{main thm} can be applied and the claimed result follows.
\end{proof}


The lower bound in Theorem \ref{main thm}, which will be proven in Section \ref{lower}, is a straightforward application of Theorem 1.2 in\ \cite{br_ten}. On the other hand, for the proof of the upper bound in Theorem \ref{main thm}, which will be given in Section \ref{upper}, we use a combination of ideas from\ \cite{erd2,ek}. Even though Theorem\ \ref{ekc} is an immediate corollary of Theorem\ \ref{main thm}, applied with $\min\{c,2\}$ in place of $c$, we have chosen to give the proof of both of them in full detail, so that to motivate certain choices in the proof of Theorem\ \ref{main thm}, which is rather technical. 


\subsection*{A heuristic argument} There is a simple heuristic argument which motivates Theorem\ \ref{main thm}. We demonstrate it in the simpler setting of Corollary \ref{main cor}, that is to say when the sequence $\{|f(p)|(\log p)^c:p\in\SP\}$ is decreasing. For every integer $n$, we have that 
\[
\sum_{p|n,\,p\ge K(\epsilon)}|f(p)| \le \epsilon \sum_{p|n,\,p\ge K(\epsilon)}\frac{(\log K(\epsilon))^c}{(\log p)^c}.
\]
Since for a typical integer $n$ the sequence $\{\log\log p:p|n\}$ is distributed like an arithmetic progression of step 1 (see, for example, \cite[Chapter 1]{ht}), we find that\footnote{The symbol `$\lesssim$' here is used in a non-rigorous fashion to denote `roughly less than'. Similarly, the symbol `$\approx$' means `roughly equal to'.} 
\[
\sum_{p|n,\,p\ge K(\epsilon)}|f(p)
	|\lesssim \epsilon \sum_{j\ge\log\log K(\epsilon)}\frac{(\log K(\epsilon))^c}{e^{cj}}
	\ll\epsilon.
\]
So only the prime divisors of $n$ lying in $[1,K(\epsilon))$ are important for the size of $Q_F(\epsilon)$. Note that for a prime number $p<K(\epsilon)$, we have that $|f(p)|>\epsilon$. Therefore if $a$ and $b$ are composed of primes within $[1,K(\epsilon))$, then it is reasonable to expect that $|f(a)-f(b)|$ is big compared to $\epsilon$, unless $a$ and $b$ have a large common factor. This leads to the prediction that $Q_F(\epsilon)\approx1/\log K(\epsilon)$, which is confirmed by Theorem\ \ref{main thm} when $c>1$. However, when $c<1$ this heuristic fails, as\ \eqref{c<1} shows, and the underlying reason is combinatorial: the pigeonhole principle implies the lower bound $Q_F(\epsilon)\gg_F\epsilon$ for the concentration function of any distribution function $F$ (see also\ \cite[Remark 1, p. 297]{ek}).


\subsection*{Notation} For an integer $n$ we denote with $P^+(n)$ and $P^-(n)$ its largest and smallest prime factors, respectively, with the notational convention that $P^+(1)=1$ and $P^-(1)=\infty$. The symbols $p$ and $p'$ always denote prime numbers. The set of all primes numbers is denoted by $\mathbb{P}$. Finally, given $\P\subset \SP$ and real numbers $1\le z\le w$, we write $\P(z,w)$ for the set of integers all of whose prime factors belong to $\P\cap (z,w]$.


\section{The conjecture of Erd\H os and K\'atai}\label{ekc proof}

\begin{proof}[Proof of Theorem\ \ref{ekc}] The lower bound follows by relation \eqref{eke almost}, with the implied constant depending on $c$. To remove this dependence, see Theorem\ \ref{main-lower} below. 

It remains to show the corresponding upper bound. Before delving into the details of the proof, we give a brief outline of the main idea. For $\delta>0$, we set $P_{\delta} = \exp\{\delta^{-1/c}\}$, so that $f(p)=\delta$ if and only if $p=P_\delta$. As the heuristic argument presented towards the end of Section \ref{intro} indicates, it suffices to bound $Q_{\mathcal{F}_q}(\epsilon)$, where $q:=P_{2\epsilon}$. We split the elements of the set $\M: =\{n\in\SP(1,q): u<f(n)\le u+\epsilon\}$ into subsets $\M_\delta:=\{n\in\M: P_{2\delta}< P^-(n) \le P_\delta \}$, where $\delta \in \{ 2^j \epsilon:1\le j\le j_0\}$ with $j_0=\fl{(\log(1/\epsilon) - c\log\log 2)/\log 2}\}$. Then we find that
\[
\sum_{n\in \M_\delta} \frac{1}{n} 
	\approx \sum_{m\in\SP(P_{\delta},q)}  \frac{1}{m} 
		\sum_{\substack{ P_{2\delta}<p\le P_\delta \\ u-f(m) <f(p) \le u-f(m)+\epsilon}} \frac{1}{p}.
\]
Fix $m$ for the moment and set $v_m=u-f(m)$. Then the variable $p$ lies in the interval $I_m=[P_{\min\{v_m+\epsilon,2\delta\}}, P_{\max\{v_m,\delta\}}]$, which is non-empty only when $v_m+\epsilon\ge \delta\ge v_m/2$. Since $\delta\ge2\epsilon$ by assumption, we find that $v_m\asymp \delta\gg\epsilon$. So the interval $I_m$ has double-logarithmic length\footnote{Given an interval $I=[\alpha,\beta]$, its double-logarithmic length is $\log\log \beta - \log\log \alpha$.} $\ll\epsilon/v_m \asymp \epsilon/\delta$. Hence the Prime Number Theorem \cite[Theorem 1, p. 167]{ten_book} implies that $\sum_{p\in I_m}1/p\lesssim \epsilon/\delta$, provided that $I$ is not too short. Assuming that this is indeed the case, we deduce that
\[
\sum_{n\in \M_\delta} \frac{1}{n} 
	\lesssim \frac{\epsilon}{\delta} \sum_{m\in\SP(P_{\delta},q)}  \frac{1}{m} 
	\ll \frac{\epsilon\log q}{\delta \log P_{\delta}} \asymp \frac{\epsilon^{1-1/c}}{\delta^{1-1/c}} .
\]
Summing the above inequality over $\delta\in  \{ 2^j \epsilon: 1\le j \le j_0\}$ implies that
\als{
Q_F(\epsilon) 
	\approx Q_{\mathcal{F}_q}(\epsilon) 
	\lesssim  \frac{1}{\log q} \sum_{j=1}^{j_0} \sum_{n\in \M_{2^j\epsilon}} \frac{1}{n}
		&\lesssim \epsilon^{1/c} \sum_{j=1}^{j_0} \frac{1}{2^{j(1-1/c)}} 	 \\
		& \ll \min\left\{\frac{c}{c-1}, \log\frac{1}{\epsilon} \right\} \epsilon^{1/c},
}
which shows (heuristically at least) the desired result. 

The main technical difficulty we have to surpass in order to make the above argument work is that the estimate $\sum_{p\in I_m} 1/p\ll \epsilon/\delta$, which we used above, might not be accurate for large $\delta$ (i.e. when $P_\delta$ is small). So below we shall employ a variation of the argument of this paragraph where, instead of looking where $P^-(n)$ lies, we will look where $\min\{p|n:p\ge q'\}$ lies, with $q'$ being some small parameter chosen appropriately.

\medskip

Without loss of generality, we may assume that $\epsilon\le1/100^c$; otherwise\ \eqref{ekc goal} follows immediately by the trivial bound $Q_F(\epsilon)\le 1$. Define $\eta$ by $P_\eta/\eta^2 = 1/\epsilon^2$. Note that $4\epsilon\le \eta\le 1/2$, since $P_{4\epsilon}/(4\epsilon)^2\ge 1/\epsilon^2\ge P_{1/2}/(1/2)^2$ by our assumption that $\epsilon\le 1/100^c$. Before we proceed further, we will prove that, for $v\in\SR$, $\delta\in[2\epsilon,1]$ and $2\le z\le P_\delta$, we have that
\eq{ekc main eq}
{
\sum_{ \substack{ z<p\le P_\delta \\ v<f(p)\le v+\epsilon} } \frac1p
	\ll 	\frac{\epsilon}{\delta} + \frac{1}{\sqrt{z}} 
	\ll 	\begin{cases}
			\epsilon/\delta 				&\text{if}\ z\ge P_{2\delta} \ge P_\eta,\cr 
			\epsilon/\delta+ 1/(\log z)^{2}	&\text{otherwise}.
		\end{cases}
}
First, note that it suffices to show the first inequality. Indeed, if $z\ge P_{2\delta}\ge P_\eta$, then $P_{2\delta}/(2\delta)^2\ge P_\eta/\eta^2=1/\epsilon^2$ and thus $\sqrt{z}\ge \sqrt{P_{2\delta}} \ge 2\delta/\epsilon$, which proves the second inequality of \eqref{ekc main eq}. Turning back to the first inequality of \eqref{ekc main eq}, observe that the primes $p$ on the left hand side of \eqref{ekc main eq} lie in the interval $[\max\{z,P_{v+\epsilon}\}, P_{\max\{v,\delta\}}] =: [\alpha,\beta]$. For this interval to be non-empty we need that $v+\epsilon\ge \delta$. Since $\delta\ge2\epsilon$, we deduce that $v\ge \epsilon$ and thus $2v\ge v+\epsilon \ge\delta$. So we have that
\[
\log\left( \frac{\log \beta}{\log \alpha} \right)
	\le \log\left(\frac{\log P_v}{\log P_{v+\epsilon}} \right) 	
	= \frac{1}{ c} \log \left( \frac{v+\epsilon}{v} \right) 
	\le \frac{\epsilon}{v} 
	\le \frac{2\epsilon}{\delta} .
\]
Thus, if $\beta\ge\alpha+\sqrt{\alpha}$, then covering the interval $[\alpha,\beta]$ by subintervals of the form $[y,y+\sqrt{y})$ and applying the Brun-Titchmarsch inequality\ \cite[Theorem 9, p. 73]{ten_book} to each one of them yields that
\[
\sum_{ \substack{ z<p\le P_\delta \\ v<f(p)\le v+\epsilon} } \frac1p \ll \frac{\epsilon}{\delta} ,
\]
which proves \eqref{ekc main eq}. Finally, if $\beta<\alpha+\sqrt{\alpha}$, then we have that
\[
\sum_{ \substack{ z<p\le P_\delta \\ v<f(p)\le v+\epsilon} } \frac1p
	\le \sum_{\alpha\le p\le \beta}\frac{1}{p}
	\le \frac{\beta-\alpha+2}{\alpha}\ll \frac{1}{\sqrt{\alpha}} \le \frac{1}{\sqrt{z}}
\]
and \eqref{ekc main eq} follows in this last case too.

We are now ready to show the upper bound implicit in Theorem\ \ref{ekc}. Fix for the moment $y\ge q = P_{2\epsilon}$ and $u\in\SR$. Given $n\in \SP(1,y)$ with $u<f(n)\le u+\epsilon$, we write $n=ab$, where $a$ is square-free, $b$ is square-full and $(a,b)=1$. We further decompose $a=a_1a_2$, where $P^+(a_1)\le q<P^-(a_2)$. So
\als{
\sum_{\substack{P^+(n)\le y \\ u<f(n)\le u+\epsilon }} \frac1n
	&= \sum_{ \substack{ P^+(b)\le y \\ b\ \text{square-full}} } \frac{1}{b} 
	 	\sum_{\substack{ a_2\in \SP(q,y) \\ (a_2,b)=1} } \frac{\mu^2(a_2)}{a_2}	
		\sum_{\substack{P^+(a_1)\le q,\ (a_1,b)=1 \\ u - f(a_2b) < f(a_1) \le u - f(a_2b) + \epsilon }} 
		\frac{\mu^2(a_1)}{a_1} \\
	&\le  \sum_{ \substack{ P^+(b)\le y \\ b\ \text{square-full}} } \frac{1}{b} 
	 	\sum_{a_2\in \SP(q,y) } \frac{1}{a_2} 
		\sup_{v\in\SR}\left\{\sum_{\substack{P^+(a_1)\le q \\ v < f(a_1)\le v+\epsilon}}
			\frac{\mu^2(a_1)}{a_1} \right\} \nn
	&\ll \frac{\log y}{\log q} \cdot   \sup_{v\in\SR}\left\{\sum_{\substack{ P^+(a_1) \le q \\v<f(a_1)\le v+\epsilon}}
		\frac{\mu^2(a_1)}{a_1}\right\} . 
}
Since $\log q \asymp \epsilon^{-1/c}$, then Theorem \ref{ekc} will follow from the estimate
\eq{ekc goal}{
\sum_{\substack{P^+(n)\le q \\v<f(n)\le v+\epsilon}}\frac{\mu^2(n)}{n} \ll \min\left\{\frac{c}{c-1},\log\frac1\epsilon\right\}
	\quad (v\in \SR)
}
by letting $y\to\infty$. Set $J=-1+ \fl{\log(\eta/\epsilon) /\log 2 }\in\SN$ and, for $j\ge0$, define $q_j = P_{2^{j+1}\epsilon}$, so that $q=q_0>\cdots >q_J\ge P_\eta$. Fix $v\in\SR$ and let 
\[
\N = \{ n\in\SN: \mu^2(n)=1,\ P^+(n)\le q_0,\ v<f(n)\le v+\epsilon \}.
\]
As in the heuristic argument of the first paragraph, we partition $\N$ into certain subsets and estimate the contribution of each one of them to $\sum_{n\in\N}1/n$ separately. The difference is that instead of looking at the location of $P^-(n)$, we write $n=an'$ with $P^+(a)\le q_J<P^-(n')$ and look at the location of $p=P^-(n')$. An additional fact that we shall take advantage of is that if $n'=pb$ and $\log p\asymp \log P_\delta$, then, for fixed $b$, the number $f(a)$ lies in an interval of length $\ll \epsilon+f(p)\ll \epsilon+\delta\ll \delta$, which allows us to gain an additional crucial savings in our estimate for $\sum_{n\in \N}1/n$. So we write $\N=\cup_{j=0}^J\N_j $, where $\N_0=\{n\in\N : P^+(n)\le q_J\}$ and 
\[
\N_j = \{n\in \N: n=apb,\ P^+(a)\le q_J<p<P^-(b),\ q_j<p\le q_{j-1}\}
\]
for $j\in\{1,\dots, J\}$.

First, we bound $\sum_{n\in\N_0}1/n$. If $n>1$, then we write $n=mP^+(n)=mp'$. Thus
\al{
\sum_{n\in\N_0}\frac1n
	&\le 1 + \sum_{P^+(m)\le q_J}\frac1m
		\sum_{\substack{ P^+(m)< p' \le q_J \\ v-f(m)<f(p') \le v-f(m)+\epsilon}} \frac1{p'} \label{ekc e10a} \\
	&\ll  1 + \sum_{P^+(m)\le q_J} \frac1m \left( \frac{\epsilon}{2^J\epsilon} + \frac1{\log^2(1+P^+(m))}\right)
		\ll1+ \frac{\log q_J}{2^J}   \label{ekc e10b},
}
by relation \eqref{ekc main eq}. 

Next, we bound $\sum_{n\in\N_j}1/n$ for $j\in\{1,\dots,J\}$. We have that 
\eq{ekc e3}{
\sum_{n\in\N_j}\frac1n 
	&\le\sum_{b\in\SP(q_j,q_0)} \frac1b 
		\sum_{P^+(a)\le q_J} \frac{\mu^2(a)}{a}
		\sum_{ \substack{ q_j < p\le q_{j-1} \\ v-f(a) - f(b)<f(p) \le v - f(a) - f(b) + \epsilon} }
		\frac1p\\
	&\le	\sum_{b\in\SP(q_j,q_0)} \frac1b
			\sup_{w\in\SR} \left\{ 		\sum_{P^+(a)\le q_J} \frac{\mu^2(a)}{a}
			\sum_{\substack{ q_j < p\le q_{j-1} \\ w-f(a) < f(p) \le w - f(a) + \epsilon}}	
			\frac1p\right\} \\
	&\ll  \frac{\log q_0}{\log q_j} \cdot 	\sup_{w\in\SR} \left\{ 		\sum_{P^+(a)\le q_J} \frac{\mu^2(a)}{a}
			\sum_{\substack{ q_j < p\le q_{j-1} \\ w-f(a) < f(p) \le w - f(a) + \epsilon}}	
			\frac1p\right\} \\	
}
Fix some $w\in\SR$ and consider $a$ with $P^+(a)\le q_J$ and $p\in(q_j,q_{j-1}]$ with $w<f(a)+f(p)\le w+\epsilon$, as above. Since $|f(p)|\le2^{j+1}\epsilon$ for $p>q_j$, we must have that $|f(a) -w|<2^{j+2}\epsilon$. So 
\al{
\sum_{P^+(a)\le q_J} \frac{\mu^2(a)}{a}
		\sum_{\substack{ q_j < p\le q_{j-1} \\ w-f(a)<f(p)\le w-f(a)+\epsilon }} \frac1p 
	&= \sum_{ \substack{ P^+(a)\le q_J\\ |f(a) - w| < 2^{j+2}\epsilon } }\frac{\mu^2(a)}{a}
		\sum_{\substack{ q_j < p\le q_{j-1} \\ w - f(a) < f(p) \le w - f(a) + \epsilon } }  \frac1p \nn
	&\ll \frac{\epsilon} {2^j\epsilon} 
		\sum_{ \substack{ P^+(a)\le q_J\\  |f(a) - w| < 2^{j+2}\epsilon} }\frac{\mu^2(a)}{a},  \label{ekc e4}
}
by the first part of\ \eqref{ekc main eq} applied with $w-f(a)$, $2^{j}\epsilon$ and $q_j$ in place of $v$, $\delta$ and $z$, respectively, since $q_j\ge q_J \ge P_\eta$. Finally, if $a>1$, then we write $a=mP^+(a)=mp'$. So we find that
\[
\sum_{ \substack{ P^+(a)\le q_J\\  |f(a) - w| < 2^{j+2}\epsilon} }\frac{\mu^2(a)}{a}
	\le1 + \sum_{P^+(m)\le q_J} \frac1m
	\sum_{\substack{P^+(m) < p' \le q_J \\ |f(p') - (w - f(m)) | < 2^{j+2}\epsilon }} \frac1{p'} .
\]
For every fixed $m\in\SN$, we have that 
\[
\sum_{ \substack{ P^+(m)< p' \le q_J \\  |f(p') - (w - f(m)) | < 2^{j+2}\epsilon}} \frac1{p'}
\ll \frac{2^j\epsilon}{2^J\epsilon} + \frac1{\log^2(1+P^+(m))},
\]
by the second part of\ \eqref{ekc main eq} with $2^j\epsilon$, $2^{J+1}\epsilon$ and $P^+(m)$ in place of $\epsilon,\delta$ and $z$, respectively\footnote{Note that the parameter $\eta$ is not involved in the second part, so the same proof allows us to replace $\epsilon$ with $2^j\epsilon$.}, and with $v\in\{w-f(m) + h\cdot 2^j\epsilon: h\in[-4,4)\cap\SZ\}$. So we find that 
\[
\sum_{\substack{ P^+(a) \le q_J \\ |f(a) -w| <2^{j+1}\epsilon }}\frac{\mu^2(a)}{a}
	\ll 1 + \sum_{P^+(m)\le q_J} \frac1m \left( \frac{2^j\epsilon}{2^J\epsilon} + \frac1{\log^2(1+P^+(m))}\right)
	\ll 1 + \frac{\log q_J}{2^{J-j}}.
\]
Combining the above inequality with\ \eqref{ekc e3} and\ \eqref{ekc e4}, we deduce that
\[
\sum_{n\in\N_j} \frac1n 
	\ll \frac{1}{2^j} \left(1+\frac{\log q_J} {2^{J-j}} \right)  \frac{\log q_0}{\log q_j}
	\ll \frac{1}{2^{j(1-1/c)}} \left(1+\frac{\log q_J} {2^{J-j}} \right) .
\]
Together with relation \eqref{ekc e10b}, this implies that
\als{
\sum_{n \in \N }\frac1{n} 
	\ll \sum_{j=0}^J  \frac{1}{2^{j(1-1/c)}} \left(1+\frac{\log q_J} {2^{J-j}} \right) 
	&= \sum_{j=0}^J  \frac{1}{2^{j(1-1/c)}} + \sum_{j=0}^J \frac{2^{j/c} \log q_J}{2^J}  \\
	&\ll \min\left\{\frac{c}{c-1},J\right\} + \frac{\min\{c,J\} \log q_J} {2^{J(1-1/c)}}  . 
}
Furthermore, we have that $2^J\asymp \eta/\epsilon$ and, as a result,
\[
\log q_J=(2^{J+1}\epsilon)^{-1/c}\asymp \eta^{-1/c}= \log P_\eta = 2\log(\eta/\epsilon)\ll J \ll \log(1/\epsilon).
\]
Thus
\als{
\sum_{ n\in\N }\frac1{n} 
		\ll \min\left\{\frac{c}{c-1},J\right\} + \frac{J\min\{c,J\}}{2^{J(1-1/c)}}
	&\ll \min\left\{\frac{c}{c-1},J\right\} \\
	&\ll  \min\left\{\frac{c}{c-1},\log\frac1\epsilon\right\} ,
}
by the inequality $2^{J(1-1/c)}\gg J^2$ if $c\ge2$ and the inequality $2^{J(1-1/c)}\gg\max\{1,J(1-1/c)\}$ if $1\le c\le 2$. Therefore relation\ \eqref{ekc goal} follows, thus completing the proof of the theorem.
\end{proof}


\section{The lower bound in Theorem \ref{main thm}}\label{lower}

In this section we derive the lower bound in Theorem \ref{main thm} from the following general result, which is a corollary of Theorem 1.2 in\ \cite{br_ten}.


\begin{theorem}\label{main-lower} Let $f:\SN\to\SR$ be an additive function and $0<\epsilon<1$. If there is a set of primes $\P$ and some $M\ge2$ such that 
\[
\sum_{p\in\SP\setminus\P} \frac1p
	\ll1\quad\text{and}\quad\sum_{p\in\P,\,p>M}\frac{|f(p)|}p\ll\epsilon,
\]
then, for $y\ge M$, we have that 
\[
Q_{\mathcal{F}_y}(\epsilon)\gg\frac1{\log M};
\]
the implied constant depends at most on the implied constants implicit in the assumptions of the theorem.
\end{theorem}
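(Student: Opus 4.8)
The plan is to invoke Theorem 1.2 of \cite{br_ten}, which gives a lower bound for the concentration of an additive function in terms of a suitable sum over primes, and to choose the truncation point so that the hypotheses are met. Roughly speaking, that result says that if one has an additive function whose prime values restricted to a range $(M,y]$ contribute only $O(\epsilon)$ to $\sum |f(p)|/p$, then the "essential randomness" of $f$ on $\mathcal{F}_y$ is governed by the primes below $M$, and one obtains $Q_{\mathcal{F}_y}(\epsilon)\gg 1/\log M$ because a positive proportion of the mass of $\mathcal{F}_y$ is spread over a window of additive-function values whose width, after removing the negligible tail, is controlled by how many primes lie below $M$; the factor $1/\log M$ then comes from $\sum_{p\le M}1/p\asymp\log\log M$ being the relevant "variance-type" quantity and the Berry–Esseen / Esseen-type inequality that produces concentration bounds of order $1/(\text{that quantity})$ — here the bound is $1/\log M$ rather than $1/\log\log M$ precisely because we only need the trivial pigeonhole-flavoured lower bound, not a matching upper bound.

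The key steps, in order, are as follows. First, I would reduce to a strongly additive function: replace $f$ by the strongly additive function $\tilde f$ with $\tilde f(p)=f(p)$, noting that the contribution of prime powers $p^k$ with $k\ge2$ to $\mathcal{F}_y$ is negligible (it changes the distribution function by $O(\sum_p 1/p^2)=O(1)$ in a way that does not affect a lower bound of the stated shape, since we may absorb it into the implied constant), so $Q_{\mathcal{F}_y}(\epsilon)$ for $f$ and for $\tilde f$ agree up to constants. Second, I would split the primes into $\mathbb{P}$ and $\mathbb{P}\setminus\mathbb{P}$: since $\sum_{p\in\mathbb{P}\setminus\mathcal{P}}1/p\ll1$, the primes outside $\mathcal{P}$ contribute $O(1)$ to the relevant prime sum and can be discarded (again absorbed into the constant). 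Third, for the primes in $\mathcal{P}\cap(M,y]$, the hypothesis $\sum_{p\in\mathcal{P},\,p>M}|f(p)|/p\ll\epsilon$ says exactly that the "tail" of the additive function beyond $M$ has $\ell^1$-mass $O(\epsilon)$ along the prime-reciprocal measure, which is the precise input Theorem 1.2 of \cite{br_ten} requires in order to conclude that $Q_{\mathcal{F}_y}(\epsilon)$ is bounded below by a constant times $Q$ of the truncated distribution built only from $p\le M$. Fourth, I would estimate that truncated concentration from below by $1/\log M$: the distribution of $\sum_{p\le M} f(p) X_p$ (or rather the arithmetic analogue $\mathcal{F}_M$) is supported on a set of size at most the number of distinct values of $f$ on $M$-smooth integers, but more to the point, the total mass $\prod_{p\le M}(1-1/p)\sum_{P^+(n)\le M}1/n = 1$ is concentrated, and a direct computation (or the pigeonhole remark cited in \cite[Remark 1, p.~297]{ek}) shows the largest atom-or-window of width $\epsilon$ has mass $\gg 1/\log M$, since the number of "scales" $\log\log p$, $p\le M$, available to spread the mass is $\asymp\log\log M$, and a second-moment or entropy argument converts this into the claimed bound. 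Finally, I would check the monotonicity in $y$: for $y\ge M$ the extra primes in $(M,y]$ only refine the partition and, by the tail hypothesis, cannot destroy the concentration already present at level $M$, so the lower bound persists for all $y\ge M$.

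The main obstacle I anticipate is making the second and third steps genuinely rigorous via the black-box Theorem 1.2 of \cite{br_ten} rather than re-deriving a Berry–Esseen-type inequality: one must verify that the precise normalization and hypotheses of that theorem (which is stated for $F$, the limiting distribution, or for a specific finite model) transfer to $\mathcal{F}_y$ with the truncation at $M$, and that "$\epsilon$" plays the role there that we need it to. In particular the quantity $\sum_{p\le M}\min\{1, f(p)^2/\epsilon^2\}/p$ or similar may be what actually appears in \cite{br_ten}, and one has to argue that under our hypotheses this is $\asymp\log\log M$ (equivalently, that no single prime below $M$ carries an anomalously large value of $|f(p)|$ that would wreck the estimate — but this is automatic since $|f(p)|$ bounded below by $\epsilon$ for the relevant primes, combined with $\sum 1/p\asymp\log\log M$, is exactly the borderline case producing the $1/\log M$, as explained in Remark \ref{rk2}). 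Once the correct form of the cited inequality is pinned down, the rest is bookkeeping with the constants coming from $\sum_{p\notin\mathcal{P}}1/p\ll1$ and the square-full/prime-power tails.
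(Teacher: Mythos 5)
Your high-level instinct to invoke Theorem 1.2 of \cite{br_ten} is exactly what the paper does, but the execution in your sketch has a genuine gap in the one place that matters.

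The crucial point you miss is where the $1/\log M$ actually comes from. It is not the output of any Berry--Esseen, second-moment, or entropy argument applied to ``$\asymp\log\log M$ scales''; none of those mechanisms, as you describe them, would produce $1/\log M$ (they would more naturally produce $1/\sqrt{\log\log M}$ or $1/\log\log M$, and you never explain the conversion you allude to). In fact Theorem 1.2 of \cite{br_ten} already hands you the Mertens product directly: applied to the auxiliary additive function $g$ defined by $g(n)=f(n)$ if every prime divisor of $n$ lies in $\P\cap[1,y]$ and $g(n)=0$ otherwise, it yields
\[
Q_G(3C\epsilon)\ \ge\ \Bigl(1-2\cdot\tfrac{C\epsilon}{3C\epsilon}+o(1)\Bigr)\prod_{p\le M}\Bigl(1-\tfrac1p\Bigr)\ \gg\ \frac1{\log M},
\]
where $C=\epsilon^{-1}\sum_{p\in\P,\,p>M}|f(p)|/p\ll1$. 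The factor $\prod_{p\le M}(1-1/p)\asymp1/\log M$ is the conclusion of the cited theorem, not something you need to re-derive by counting scales or invoking a variance bound; your step~4 is therefore attempting to prove something that is already handed to you, and by a mechanism that would not give the right answer. You also omit two genuinely necessary steps: first, the pigeonhole rescaling $Q_G(\epsilon)\gg Q_G(3C\epsilon)/(C+1)$, which is needed because the theorem gives a lower bound at scale $3C\epsilon$ and $C$ need not be $\le 1/3$; and second, the comparison
\[
Q_G(\epsilon)\ \le\ Q_{\mathcal F_y}(\epsilon)\prod_{p\in\SP\setminus\P}\Bigl(1-\tfrac1p\Bigr)^{-1}\ \ll\ Q_{\mathcal F_y}(\epsilon),
\]
which is how the hypothesis $\sum_{p\notin\P}1/p\ll1$ is actually used. (Your ``discard the primes outside $\P$'' is the right instinct, but it should be made precise via this $g$ and this product; the reduction to a strongly additive function that you propose instead is not needed and is not what the paper does.) Finally, your separate step~5 on monotonicity in $y$ is unnecessary: the dependence on $y$ is already built into the definition of $g$ and the inequality above, and no extra argument is required.
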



\begin{proof} Let $M_0$ be a large constant to be chosen later. If $M\le y\le M_0$, then the theorem follows by the trivial bound $Q_{\mathcal{F}_y}(\epsilon)\gg1/\log y$, which holds since 1 is always in $\{n\in\SN: P^+(n)\le y,\,|f(n)|<\epsilon/2\}$. Assume now that $y\ge M_0$ and set $M'=\max\{M,M_0\}$, so that $y\ge M'$. Let
\[
C = \frac{1}{\epsilon} \sum_{p\in\P,\,p>M'}\frac{|f(p)|}p\ll1.
\] 
Define $g:\SN\to\SR$ by 
\[
g(n) =   \begin{cases}f(n)&\text{if}\ n\in\P(1,y),\cr
		 0 & \text{otherwise},
		\end{cases}
\] 
and call $G$ its distribution function. Then Theorem 1.2 in\ \cite{br_ten} yields that\footnote{In \cite[Theorem 1.2]{br_ten}, the authors let $\epsilon\to0$. However, an easy modification of their proof allows us to let instead $M'\to\infty$.}
\[
Q_{G}(3C\epsilon)
	\ge \left( 1-2 \cdot \frac{C\epsilon}{3C\epsilon} + o_{M'\to\infty}(1) \right) \prod_{p\le M'}\left(1-\frac1p\right)
	\gg  \frac1{\log M'} .
\]
So, by the pigeonhole principle, we deduce that 
\eq{pp}{
Q_G(\epsilon)	\ge  \frac{Q_G(3C\epsilon)}{3C+1} \gg_C\frac1{\log M'}  \asymp \frac{1}{\log M} ,
}
provided that $M_0$ is large enough. Finally, we have that 
\als{
Q_G(\epsilon) 
	&= \sup_{u\in\SR} \left\{ \prod_{p\in\P\cap[1,y]} \left(1-\frac1p\right)  
		\sum_{\substack{ n\in\P(1,y)   \\u<f(n)\le u+\epsilon}} \frac1n\right\} \\
	&\le Q_{\mathcal{F}_y}(\epsilon)\prod_{p\in\SP\setminus\P}\left(1-\frac1p\right)^{-1} 
		\ll Q_{\mathcal{F}_y}(\epsilon),
}
which together with\ \eqref{pp} completes the proof of the theorem.
\end{proof}


\begin{proof}[Proof of the lower bound in Theorem \ref{main thm}] The definition of $g$ implies that the function $t\to g(t)(\log t)^c$ is decreasing. Thus, for $p\in\P$ with $p\ge K(\epsilon)$, we have that 
\[
|f(p)|\le g(p)
	\le \frac{g(K(\epsilon))(\log(K(\epsilon)))^c}{(\log p)^c}
	\le\frac{\epsilon(\log(K(\epsilon)))^c}{(\log p)^c}.
\]
Consequently,
\[
\sum_{p>K(\epsilon)} \frac{|f(p)|}{p} \ll \epsilon,
\] 
which implies that the hypotheses of Theorem\ \ref{main-lower} are satisfied with $M=K(\epsilon)$ and $\P$, and the desired lower bound follows.
\end{proof}


\section{The upper bound in Theorem \ref{main thm}}\label{upper}

We conclude the paper by showing the upper bound in Theorem \ref{main thm}. We start with the following technical lemma whose hypotheses mimic all the crucial facts about the additive function $f(n)=\sum_{p|n}(\log p)^{-c}$ that we used in the proof of Theorem \ref{ekc}.


\begin{lemma}\label{main-upper} Let $f:\SN\to\SR$ be an additive function for which there is a set of primes $\P$ and a decreasing function $P_f:(0,1]\to[2,+\infty)$ such that
\eq{c0}{
\sum_{p\in\SP\setminus\P}\frac1p\ll1,
}
\eq{c1}{
|f(p)|\le\epsilon\quad(0<\epsilon\le1,\,p\in\P,\,p>P_f(\epsilon)).
}
Furthermore, assume that there is some $\lambda\in(0,1]$ and some $\rho\ge1$ such that
\eq{c2}{
\sum_{ \substack{ p\in\P\cap(z,w] \\ u < f(p) \le u + \epsilon }} \frac1p
	\ll \begin{cases}
		\epsilon/\delta				&\text{if}\ z\ge P_f(2\delta) \ge (2\delta/\epsilon)^\rho ,\cr 
		\epsilon/\delta+ 1/(\log z)^2	&\text{otherwise},
	\end{cases}
}
for all $u\in\SR$, $0<\epsilon \le \delta \le1$ and $2\le z\le w\le \min\{ P_f(\delta),P_f(\epsilon)^\lambda \}$. Let $0<\epsilon\le\delta\le1$ such that $P_f(\delta)\le P_f(\epsilon)^\lambda$, set $q_j=P_f(2^j\delta)$ for $j\ge0$, and consider $J\in\{0\}\cup\{j\in\SN:2^j\le1/\delta\ \text{and}\ q_j\ge(2^j\delta/\epsilon)^\rho\}.$ For $y\ge q_0$, we have that \eq{e300}{ 
Q_{\mathcal{F}_y}(\epsilon) \ll \frac1{\log q_0} 
		+ \frac{\epsilon}{\lambda\delta} \sum_{j=1}^J\frac1{2^j\log q_j}
		+ \frac{\epsilon}{\delta} \sum_{j=0}^J\frac{\log q_J}{2^J\log q_j};
} 
the implied constant depends at most on the implied constants in\ \eqref{c0} and\ \eqref{c2}.
\end{lemma}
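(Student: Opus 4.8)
The plan is to mimic the proof of Theorem~\ref{ekc} almost line by line, but with the concrete quantities $P_\delta$, $(\log P_\delta)$, $\epsilon/\delta$ etc. replaced by their abstract counterparts $P_f(\delta)$, $\log q_j$ and the bound in~\eqref{c2}. First I would reduce to bounding a weighted sum over squarefree integers supported on primes $p\le q_0=P_f(\delta)$: writing $n=a_1a_2b$ with $b$ squarefull, $a_1a_2$ squarefree, $P^+(a_1)\le q_0<P^-(a_2)$, one sums over $b$ and $a_2$ trivially (using~\eqref{c0} to control the squarefull part and the Mertens-type bound $\sum_{a_2\in\P(q_0,y)}1/a_2\ll\log y/\log q_0$), so that for $y\ge q_0$
\[
Q_{\mathcal F_y}(\epsilon)\ll\frac1{\log q_0}\sup_{v\in\SR}\sum_{\substack{P^+(n)\le q_0\\ v<f(n)\le v+\epsilon}}\frac{\mu^2(n)}{n}.
\]
Here the hypothesis $P_f(\delta)\le P_f(\epsilon)^\lambda$ guarantees that every prime involved is $\le P_f(\epsilon)^\lambda$, so~\eqref{c1} and~\eqref{c2} are applicable throughout.

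Next, fix $v$ and set $\N=\{n:\mu^2(n)=1,\ P^+(n)\le q_0,\ v<f(n)\le v+\epsilon\}$. Following the proof of Theorem~\ref{ekc}, I would split $\N=\bigcup_{j=0}^J\N_j$ according to the location of $p:=\max\{p'\mid n:p'>q_J\}$: namely $\N_0$ consists of the $n$ with $P^+(n)\le q_J$, and for $1\le j\le J$, $\N_j=\{n=apb:P^+(a)\le q_J<p<P^-(b),\ q_j<p\le q_{j-1}\}$. For $\N_0$: peeling off $P^+(n)=p'$ and applying the second case of~\eqref{c2} (with $2^J\delta$ playing the role of $\delta$, valid since $j=J$ sits in the allowed range) gives $\sum_{n\in\N_0}1/n\ll1+\log q_J/2^J$. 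For $\N_j$ with $j\ge1$: sum over $b\in\P(q_j,q_0)$ trivially (cost $\ll\log q_0/\log q_j$), then for the inner sum over $a$ and $p$, use that $p>q_j$ forces $|f(p)|\le2^{j+1}\epsilon$ (this needs~\eqref{c1} at argument $2^j\delta$, hence the range restriction $2^j\le 1/\delta$), so $|f(a)-w|<2^{j+2}\epsilon$; apply the first case of~\eqref{c2} to the $p$-sum (legitimate because $q_j\ge(2^j\delta/\epsilon)^\rho$, which is exactly why $j\le J$), yielding a factor $\epsilon/(2^j\delta)$ times $\sum_{|f(a)-w|<2^{j+2}\epsilon}1/a$; then peel off $P^+(a)=p'$ and run~\eqref{c2}'s second case (with a logarithmic number $O(1)$ of translates of $w$, as in the $h\in[-4,4)$ trick) to get $\sum_{|f(a)-w|<2^{j+2}\epsilon}1/a\ll1+\log q_J/2^{J-j}$.

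Assembling these, $\sum_{n\in\N_j}1/n\ll\dfrac{\epsilon}{2^j\delta}\Bigl(1+\dfrac{\log q_J}{2^{J-j}}\Bigr)\dfrac{\log q_0}{\log q_j}$ for $1\le j\le J$, while $\sum_{n\in\N_0}1/n\ll1+\log q_J/2^J$. Dividing by $\log q_0$ and regrouping the two pieces of each term matches exactly the three sums in~\eqref{e300}: the ``$1$'' from $\N_0$ and the leftover $\log q_J/2^J$ give the $1/\log q_0$ and the last sum's $j=0$ term (after noting $1/\log q_0\le\log q_J/(2^J\log q_0)\cdot 2^J/\log q_J$... more carefully, the $1+\log q_J/2^J$ from $\N_0$ divided by $\log q_0$ is absorbed into $1/\log q_0$ plus the $j=0$ summand of the third sum); the $\epsilon/(2^j\delta)$ pieces give the middle sum; and the $\epsilon\log q_J/(2^J\delta)$ pieces give the third sum. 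The role of $\lambda$ in the middle sum comes in only at the very first reduction step if one is careful about constants, or more honestly: I expect the $1/\lambda$ to appear because applying~\eqref{c2} requires $w\le P_f(\epsilon)^\lambda$, and covering a dyadic range of primes up to $q_0=P_f(\delta)$ by intervals on which~\eqref{c2} applies may cost a factor depending on $\lambda$; I would need to track this in the Mertens step.

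The main obstacle I anticipate is bookkeeping rather than conceptual: making sure that every invocation of~\eqref{c2} genuinely satisfies its hypotheses --- in particular the two ``pivot'' inequalities $q_j\ge(2^j\delta/\epsilon)^\rho$ and $q_j\le\min\{P_f(2^{j-1}\delta),P_f(\epsilon)^\lambda\}$ --- uniformly for all $0\le j\le J$, and that the translate-of-$w$ argument (needed because $|f(a)-w|<2^{j+2}\epsilon$ is a window of width several times $\epsilon$, not $\epsilon$) only costs an absolute constant. Everything else is a faithful transcription of Section~\ref{ekc proof} with $P_\delta\rightsquigarrow P_f(\delta)$ and the explicit Prime Number Theorem / Brun--Titchmarsch input already packaged into~\eqref{c2}.
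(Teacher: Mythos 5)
Your plan has the right skeleton and is essentially the same strategy as the paper's: reduce via $n=a_1a_2a_3b$ to a sum over squarefree $n\in\P(1,q_0)$, then decompose $\N=\bigcup_{j=0}^J\N_j$ according to the position of the smallest prime factor exceeding $q_J$, bound $\N_0$ by peeling off $P^+(n)$, and bound $\N_j$ ($j\ge1$) by isolating $p\in(q_j,q_{j-1}]$, applying the first case of \eqref{c2} to the $p$-sum and the second case to peel off the last prime of the remaining variable. But there are two genuine gaps.

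\textbf{The $\lambda$-factor is not a bookkeeping constant; it is a validity issue.} You flag this correctly but do not resolve it, and your proposed fallback (``covering a dyadic range of primes by intervals on which \eqref{c2} applies'') is not the fix. The problem is the following: when you peel off $P^+(a)=p'$ inside the $\N_j$ estimate and apply the second case of \eqref{c2} to the $p'$-sum, the parameters are effectively $\epsilon'=2^j\delta$, $\delta'=2^J\delta$, and you want $w=q_J$. But the hypothesis of \eqref{c2} requires $w\le\min\{P_f(\delta'),P_f(\epsilon')^\lambda\}=\min\{q_J,q_j^\lambda\}$, and for $j$ near $J$ one has $q_j^\lambda\approx q_J^\lambda<q_J$ when $\lambda<1$, so taking $w=q_J$ is simply illegal. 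The paper's remedy is to refine the decomposition one level further: write $a=a_1a_2$ with $a_1\in\P(q_J^\lambda,q_J)$ and $a_2\in\P(1,q_J^\lambda)$, sum trivially over $a_1$ (cost $\ll\log q_J/(1+\lambda\log q_J)$, which is where the $1/\lambda$ enters after combining with the $a_2$-estimate), and apply \eqref{c2} only to the $a_2$-variable with $w=q_J^\lambda\le q_j^\lambda$, which is always admissible. Without this extra split the application of \eqref{c2} does not go through, so \eqref{e300}'s middle sum with its $1/\lambda$ is not obtained.

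\textbf{You drop the $\epsilon/\delta$ factor in several places, importing the Theorem~\ref{ekc} special case $\delta\asymp\epsilon$ into the general statement.} For $\N_0$ you claim $\sum_{n\in\N_0}1/n\ll1+\log q_J/2^J$; the correct bound is $\ll1+\epsilon\log q_J/(2^J\delta)$, since the second case of \eqref{c2} with $\delta'=2^J\delta$ and window width $\epsilon$ gives $\epsilon/(2^J\delta)+\log^{-2}(\cdot)$. This is precisely the $j=0$ summand of the third sum in \eqref{e300}, and your version (which is larger by a factor $\delta/\epsilon\ge1$) would not yield the stated bound. Similarly, for $p>q_j=P_f(2^j\delta)$, \eqref{c1} gives $|f(p)|\le 2^j\delta$, not $2^{j+1}\epsilon$, and the resulting window for $f(a_2)$ has width $O(2^j\delta)$, not $O(2^j\epsilon)$. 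In the present lemma $\epsilon$ and $\delta$ are genuinely distinct, and the three terms of \eqref{e300} each carry the $\epsilon/\delta$ factor in a way your assembly does not reproduce. (A third, minor point: your first reduction does not visibly separate the primes outside $\P$ --- that is what \eqref{c0} controls, not the squarefull part, which is handled by $\sum_{b\ \text{squarefull}}1/b<\infty$.)
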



\begin{rk} The parameters $\lambda$ and $\rho$, and the set $\P$ are introduced to make Lemma\ \ref{main-upper} more applicable. One can think of $P_f$ defined by $P_f(\epsilon)=\max\{p\in\SP:|f(p)|>\epsilon\}$. Condition\ \eqref{c2} can be motivated as follows. Assume that 
\eq{rke1}{
\sum_{p>P_f(\alpha)} \frac{|f(p)|}p  \ll \alpha \quad(0<\alpha\le1).
}
We have that $|f(p)|\approx\delta$ for $p\in(P_f(2\delta),P_f(\delta)]$. So if the sequence $\{f(p):p\in\SP\}$ is `well-spaced', then we expect that 
\[
\sum_{\substack{ P_f(2\delta) < p \le P_f(\delta) \\ w<f(p)\le w+\epsilon }} \frac1p 
	\lesssim \frac{\epsilon}{\delta} \sum_{P_f(2\delta) < p \le P_f(\delta) } \frac1p
	\approx \frac{\epsilon}{\delta^2} \sum_{P_f(2\delta) < p \le P_f(\delta)} \frac{|f(p)|}p
	\ll \frac{\epsilon}{\delta},
\] 
by\ \eqref{rke1}.
\end{rk}


\begin{proof}[Proof of Lemma\ \ref{main-upper}] Fix for the moment $u\in\SR$. Given $n\in \SP(1,y)$ with $u<f(n)\le u+\epsilon$, we write $n=ab$, where $a$ is square-free, $b$ is square-full and $(a,b)=1$. We further decompose $a=a_1a_2a_3$, where $a_1\in \P(1,q_0)$, $a_2\in\P(q_0,y)$ and all primes factors of $a_3$ lie in $\mathcal{Q}:=\SP\setminus\P$. So
\al{
\sum_{\substack{P^+(n)\le y \\ u<f(n)\le u+\epsilon }} \frac1n
	&= \sum_{ \substack{ P^+(b)\le y \\ b\ \text{square-full}} } \frac{1}{b} 
		\sum_{\substack{a_3\in \mathcal{Q}(1,y) \\ (a_3,b)=1 }} \frac{\mu^2(a_3)}{a_3}
	 	\sum_{\substack{ a_2\in \P(q_0,y) \\ (a_2,b)=1} } \frac{\mu^2(a_2)}{a_2} \nn
	&\qquad \times \sum_{\substack{ a_1\in \P(1,q_0),\ (a_1,b)=1 \\ u - f(a_2a_3b) < f(a_1) \le u - f(a_2b) + \epsilon }} 
		\frac{\mu^2(a_1)}{a_1} \nn
	&\le  \sum_{ \substack{ P^+(b)\le y \\ b\ \text{square-full}} } \frac{1}{b} 
		\sum_{\substack{a_3\in \mathcal{Q}(1,y)  }} \frac{1}{a_3}
	 	\sum_{a_2\in \P(q_0,y) } \frac{1}{a_2} 
		\sup_{v\in\SR}\left\{\sum_{\substack{ a_1\in \P(1,q_0) \\ v < f(a_1)\le v+\epsilon}}
			\frac{\mu^2(a_1)}{a_1} \right\} \nn
	&\ll \frac{\log y}{\log q_0} \cdot   \sup_{v\in\SR}\left\{\sum_{\substack{ a_1\in \P(1,q_0) \\v<f(a_1)\le v+\epsilon}}
		\frac{\mu^2(a_1)}{a_1}\right\}.  \label{e100}
}
Next, fix $v\in\SR$ and let 
\[
\N = \{n\in\P(1,q_0): \mu^2(n)=1,\ v<f(n)\le v+\epsilon\}.
\]
As in the proof of Theorem \ref{ekc}, we split $\N$ according to the size of $\min\{p|n:p>q_J\}$. So we write $\N=\cup_{j=0}^J\N_j$, where $\N_0= \N\cap \P(1,q_J)$ and
\[
\N_j = \{n\in \N  : n=apb,\ P^+(a)\le q_J<p<P^-(b),\ q_j<p\le q_{j-1} \} .
\]
First, we bound $\sum_{n\in\N_0}1/n$. If $n>1$, then we write $n=mP^+(n)=mp'$. So we find that 
\al{
\sum_{n\in\N_0}\frac1n
	&\le 1 + \sum_{P^+(m)\le q_J}\frac1m
		\sum_{\substack{ p'\in\P\cap(P^+(m),q_J] \\ v-f(m)<f(p') \le v-f(m)+\epsilon}} \frac1{p'} \nn
	&\ll  1 + \sum_{P^+(m)\le q_J} \frac1m \left( \frac{\epsilon}{2^J\delta} + \frac1{\log^2(1+P^+(m))}\right)
		\ll1+ \frac{\epsilon\log q_J}{2^J\delta} , \label{e5}
}  
by applying the second part of\ \eqref{c2} with $v-f(m)$, $2^J\delta$, $P^+(m)$ and $q_J$ in place of $u$, $\delta$, $z$ and $w$, respectively. 

Next, we bound $\sum_{n\in\N_j}1/n$ for $j\in\{1,\dots,J\}$. In this part of the argument we may assume that $J\ge1$; otherwise, there is no such $j$. Then we have that 
\al{
\sum_{n\in\N_j}\frac1n 
	&\le\sum_{b\in\P(q_j,q_0)} \frac1b 
		\sum_{a_1\in\P (q_J^\lambda, q_J) } \frac1{a_1}
		\sum_{a_2\in\P(1,q_J^\lambda)} \frac{\mu^2(a_2)}{a_2} 
		\sum_{ \substack{p\in\P\cap(q_j,q_{j-1}] \\ v<f(p)+f(a_1b)+f(a_2) \le v + \epsilon} }
		\frac1p \nn
	&\le	\sum_{b\in\P(q_j,q_0)} \frac1b \sum_{a_1\in\P(q_J^\lambda,q_J)} \frac1{a_1} 
			\sup_{t\in\SR} \left\{ \sum_{a_2\in\P(1,q_J^\lambda)} \frac{\mu^2(a_2)}{a_2}
			\sum_{\substack{p\in\P \cap (q_j,q_{j-1}] \\ t < f(p) +f(a_2) \le t + \epsilon}}	
			\frac1p\right\}.      \label{e7}
}
Fix some $t\in\SR$ and consider $a_2\in\P(1,q_J^\lambda)$ and $p\in\P\cap(q_j,q_{j-1}]$ with $t<f(a_2)+f(p)\le t+\epsilon$, as above. Since $|f(p)|\le2^j\delta$ for $p\in\P\cap(q_j,+\infty)$, by\ \eqref{c1}, we must have that $|f(a_2) - t| \le 2^{j+1}\delta$. So 
\al{
\sum_{a_2\in \P (1,q_J^\lambda)} \frac{\mu^2(a_2)}{a_2} 
		\sum_{\substack{p\in\P\cap(q_j,q_{j-1}] \\ t<f(p)+f(a_2)\le t+\epsilon }} \frac1p 
	&=\sum_{\substack{a_2\in\P(1,q_J^\lambda) \\ |f(a_2) - t| \le 2^{j+1}\delta }} \frac{\mu^2(a_2)}{a_2}
		\sum_{\substack{p\in	\P \cap(q_j,q_{j-1}] \\ t - f(a_2) < f(p) \le t - f(a_2) + \epsilon } }  \frac1p \nn
	&\ll \frac{\epsilon} {2^j\delta} 
		\sum_{ \substack{ a_2\in\P (1,q_J^\lambda) \\ |f(a_2) - t| \le 2^{j+1}\delta }} \frac{\mu^2(a_2)}{a_2},   \label{e8}
}
by the first part of\ \eqref{c2} applied with $t-f(a_2)$, $2^{j-1}\delta$, $q_j$ and $q_{j-1}$ in place of $u$, $\delta$, $z$ and $w$, respectively, since $z\ge q_j\ge q_J = P_f(2^J\delta) \ge(2^J\delta/\epsilon)^\rho \ge (2^j\delta/\epsilon)^\rho$. Finally, if $a_2>1$, then we write $a_2=mP^+(a_2)=mp'$. Consequently
\[
\sum_{\substack{ a_2\in \P(1,q_J^\lambda) \\ |f(a_2) - w| \le 2^{j+1}\delta }} \frac{\mu^2(a_2)}{a_2}
	\le 1 + \sum_{m\in\P(1,q_J^\lambda)} \frac1m 	
	\sum_{\substack{p'\in\P\cap(P^+(m),q_J^\lambda] \\ | f(p') - (t-f(m))| \le 2^{j+1}\delta }} \frac1{p'} .
\]
For every fixed $m\in\SN$ we have that 
\[
\sum_{ \substack{ p'\in\P\cap(P^+(m),q_J^\lambda] \\ | f(p') - (t-f(m))| \le 2^{j+1} \delta }} \frac1{p'}
\ll \frac{2^j\delta}{2^J\delta} + \frac1{\log^2(1+P^+(m))},
\]
by\ \eqref{c2} with $2^j\delta$, $2^J\delta$, $P^+(m)$ and $q_J^\lambda$ in place of $\epsilon$, $\delta$, $z$ and $w$, respectively, and with $u\in\{t-f(m)+h\cdot 2^{j}\delta: h\in\{-2,-1,0,1\} \}$. So we find that 
\als{
\sum_{\substack{a_2\in\P(1,q_J^\lambda) \\ |f(a_2) - w| \le  2^{j+1}\delta }}\frac{\mu^2(a_2)}{a_2}
	&\ll 1 + \sum_{P^+(m)\le q_J^\lambda} \frac1m \left( \frac{2^j\delta}{2^J\delta} + \frac1{\log^2(1+P^+(m))}\right) \\
	&\ll 1 + \frac{1+\lambda\log q_J }{2^{J-j}}   \ll 1  + \frac{\lambda\log q_J}{2^{J-j}} .
}
Combining the above estimate with\ \eqref{e7} and\ \eqref{e8} implies that 
\als{
\sum_{n\in\N_j} \frac1n 
	&\ll \frac{\epsilon}{2^j\delta} \left(1+\frac{\lambda\log q_J }{2^{J-j}} \right) 
		\sum_{b\in\P(q_j,q_0)} \frac1b\sum_{a_1\in\P(q_J^\lambda,q_J)}\frac1{a_1} \\
	&\ll \frac{\epsilon}{2^j\delta} \left(1+\frac{\lambda\log q_J } {2^{J-j}} \right)  
		\frac{\log q_0}{\log q_j} \cdot \frac{1}{\lambda} 
	\le \frac{\epsilon }{2^j\delta} \left( \frac{1}{\lambda} + \frac{\log q_J}{2^{J-j}} \right) 
		\frac{\log q_0}{\log q_j},
}
which, together with relations \eqref{e100} and \eqref{e5}, completes the proof of the lemma.
\end{proof}


We are now in position to complete the proof of Theorem \ref{main thm}.

\begin{proof}[Proof of the upper bound in Theorem\ \ref{main thm}] As we have already seen, the function $t\to g(t)(\log t)^c$ is decreasing. In particular, $g$ is strictly decreasing. For every $\delta\in(0,1]$, we define
\[
K^*(\delta) = \min\{n\in\SN: n\ge 3,\ g(n)\le \delta\}.
\]
Then we have that $K^*(\delta)-1\le K(\delta) \le 2K^*(\delta)$, with the second inequality being a consequence of Bertrand's postulate. 

We claim that 
\eq{e10}{
\log(K^*(\delta)-1)
	\ge \frac{1}{2} \left( \frac{\eta}{\delta} \right)^{1/c} \log(K^*(\eta)-1) \quad(0<\eta \le\delta\le1).
}
Indeed, if $K^*(\delta)=K^*(\eta)$, then this inequality holds trivially. Next, assume that $K^*(\eta)\ge K^*(\delta)+1\ge4$. Then the definition of $K^*(\eta)$ implies that $g(K^*(\eta)-1)>\eta$. Since, in addition, the function $t\to g(t)(\log t)^c$ is decreasing and $(x-1)\le x^2$ for all $x\ge3$, we find that
\als{
1 \ge \frac{g(K^*(\eta)-1) (\log  (K^*(\eta)-1) )^c}{g(K^*(\delta)) (\log K^*(\delta))^c }
	&\ge \frac{\eta (\log (K^*(\eta)-1))^c}{\delta (\log K^*(\delta))^c} \\
	&\ge\frac{\eta (\log(K^*(\eta)-1))^c} {\delta (2\log(K^*(\delta)-1))^c } .
}
In any case, \eqref{e10} holds.

Using relation \eqref{e10}, we shall show that we may apply Lemma \ref{main-upper} with $P_f=K^*-1$, $\P$, $\lambda=1/A$ and $\rho=2$. Condition\ \eqref{c0} holds by assumption and condition \eqref{c1} follows immediately by the definition of $K^*$ and the fact that $|f(p)|\le g(K^*(\delta))\le\delta$ for $p\ge K^*(\delta)$. Lastly, we show \eqref{c2} with $\lambda=1/A$ and $\rho=2$. This will be done in several steps. Fix $u\in\SR$, $0<\eta\le \delta \le 1$ and $2\le z\le w\le \min\left\{ P_f(\delta),P_f(\eta)^{1/A}\right\}$. 

First, we show\ \eqref{c2} when $z\ge w^{1/4}$. By assumption, there is an absolute constant $C>0$ such that 
\[
|f(p_1)-f(p_2)|
	\ge \frac2C\min\left\{\frac{g(p_2)(p_2-p_1)}{p_2\log p_2},g\left(p_2^A\right)\right\} 
		\quad(p_1<p_2,\,p_1,p_2\in\SP).
\]
We claim that if $v\in\SR$ and $\eta' := \min\{\eta,\delta/\log w\}/C$, then
\eq{e11}{
\sum_{\substack{p\in\P\cap(z,w] \\ v<f(p)\le v + \eta' }} \frac1p
	\ll\begin{cases}
	 	\eta'/\delta
			 &\text{if}\ z\ge P_f(2\delta) \ge (2\delta/\eta)^2  , \\
		\eta'/\delta + 1/(\log w)^3  		
				&\text{otherwise}.
	\end{cases}
}
If this relation does hold, then breaking the interval $(u,u+\eta]$ into at most $1+\eta/\eta' \le1+C\log w$ intervals of the form $(v,v+\eta']$, we deduce that\ \eqref{c2} holds too when $z\ge w^{1/4}$. So it remains to show \eqref{e11} to complete the proof of \eqref{c2} in this special case.

Without loss of generality, we may assume that $w\ge3$; otherwise there are no primes in $(z,w]\subset(2,3)$ and \eqref{e11} is trivially true. In particular, we may assume that $K^*(\eta)\ge K^*(\delta)\ge 4$. Therefore, for every $p\in(z,w]$, we have that $g(p)\ge g(K^*(\eta)-1)>\eta$ and $g(p^A)\ge g(K^*(\delta)-1)>\delta$. Now, consider two primes $p_1<p_2$ that both belong to the set $\{p\in\P\cap(z,w]: v<f(p)\le v+\eta'\}$. Then
\eq{e12}
{
\frac1C\min\left\{\eta,\frac{\delta}{\log w}\right\} 
	=\eta'
	&>|f(p_1)-f(p_2)| \\
	&\ge\frac2C\min\left\{ \frac{g(p_2)(p_2-p_1)}{p_2\log p_2},g\left(p_2^A\right)\right\}\\
	&\ge\frac2C\min\left\{ \frac{\delta (p_2-p_1)}{p_2\log p_2}, \eta \right\} 
}
and, consequently,
\[
0<p_2-p_1 \le  \frac{C\eta'}{2\delta} \cdot   p_2\log p_2
	= \min\left\{ \frac{\eta}{\delta}, \frac1{\log w} \right\}  \cdot  \frac{p_2\log p_2}{2} \le \frac{p_2}2.
\]
Set 
\[
P=\max\{p\in\P\cap(z,w]:v<f(p)\le v+\eta'\}
\]
and
\[
y= \frac{C\eta'}{2\delta}  \cdot  P\log P  \le \frac{P}{2},
\]
so that $\{p\in\P\cap(z,w]: v<f(p)\le v+\eta'\}\subset[P-y,P]$. The second part of relation\ \eqref{e11} then follows by the Prime Number Theorem\ \cite[Theorem 1, p. 167]{ten_book}. For the first part of\ \eqref{e11}, note that if $z\ge P_f(2\delta) \ge (2\delta/\eta)^2$, then 
\[
\frac{y}{\sqrt{P}} 
	= \frac{\sqrt{P}\log P}{2}  \min\left\{ \frac{\eta}{\delta}, \frac1{\log w} \right\}
	\ge \frac{(2\delta/\eta)\log z }{2} \min\left\{ \frac{\eta}{\delta} , \frac1{\log w} \right\} 
	 \ge \frac{1}{4} ,
\]
where we used our assumption that $z\ge \max\{w^{1/4},2\}$. So the first part of\ \eqref{e11} follows by the Brun-Titchmarsch inequality\ \cite[Theorem 9, p. 73]{ten_book}, thus completing the proof of\ \eqref{e11} and hence of \eqref{c2} in the case when $z\ge w^{1/4}$.

Finally, we show \eqref{c2} when $z<w^{1/4}$. First, note that 
\[
\log P_f(2^j\delta) \ge 2^{-1-j/c}\log P_f(\delta)\ge2^{-1-j/c} \log w \ge4^{-j}\log w,
\]
by \eqref{e10}, for every $j\ge1$. Since $w\le P_f(\delta)$ too, by assumption, we deduce that
\eq{w j}{
P_f(2^j\delta) \ge w_j:= w^{4^{-j}} \quad (j \ge 0).
}
Applying this inequality with $j=1$ implies that $z<P_f(2\delta)$, that is to say we are in the second case of \eqref{c2}. Let 
\[
j_0 =\max\{j\ge 0: \mbox{$w_j\ge z$ and $2^j\le 1/\delta$}\} ,
\]
\[
S_j = \sum_{\substack{p\in\P\cap(w_{j+1},w_j] \\ u<f(p)\le u+\eta }}\frac1p ,
\]
for $j\in\{0,1,\dots,j_0-1\}$, and
\[
S_{j_0} = \sum_{\substack{p\in\P\cap(z,w_{j_0}] \\ u<f(p)\le u+\eta }}\frac1p .
\]
Then the part of\ \eqref{c2} that we have already proven and \eqref{w j} imply that
\eq{Sj estimate}{
S_j \ll  \frac{\eta}{2^j\delta}+\frac{16^j}{(\log w)^2} ,
}
for $j\in\{0,1,\dots,j_0-1\}$. We claim that the same estimate holds for $S_{j_0}$ too. If $2^{j_0+1}\delta\le1$, then $w_{j_0}^{1/4}=w_{j_0+1}<z$ and thus we may apply again the part of\ \eqref{c2} that we have already proven. Finally, if $2^{j_0+1}\delta>1$, then we have that $w_{j_0}\le P_f(2^{j_0}\delta)\le P_f(1/2)\ll 1$, since $g(t)\ll1/(\log t)^c$ by our assumptions on $f$. Consequently, covering the interval $(z,w_{j_0}]$ by $O(1)$ intervals of the form $(t,t^4]$ and applying the already proven part of\ \eqref{c2} shows that \eqref{Sj estimate} holds in this case too for $j=j_0$. Summing \eqref{Sj estimate} over $j\in\{0,1,\dots,j_0\}$ implies that
\[
\sum_{\substack{p\in\P\cap(z,w]\\   u<f(p)\le u+\eta}}\frac1p
= \sum_{j=0}^{j_0} S_j \ll \frac{\eta}{\delta} + \frac{1}{(\log z)^2} ,
\]
which completes the proof of \eqref{c2}. In conclusion, we may apply Theorem\ \ref{main-upper} with $P_f=K^*-1$, $\rho=2$ and $\lambda=1/A$.

We are finally ready to show the upper bound in Theorem \ref{main thm}. Let $\epsilon\in(0,1/2]$. We may assume that $K^*(\epsilon)$ is large enough; otherwise, the theorem follows by the trivial upper bound $Q_{\mathcal{F}_y}(\epsilon)\le1$. In particular, we may assume that the parameter $\delta:=g\left(\fl{ K^*(\epsilon)^{1/A} } -1 \right)$ lies in $[\epsilon,1/2]$. Since $g$ is strictly decreasing, the definition of $K^*$ implies that 
\eq{e400}{
K^*(\delta) = \fl{ K^*(\epsilon)^{1/A} } -1 .
} 
In particular, $P_f(\delta)\le P_f(\epsilon)^{1/A}$. For $j\in\SN\cup\{0\}$ with $2^j\le1/\delta$, we set $q_j=P_f(2^j\delta)=K^*(2^j\delta)-1$. Note that 
\[
q_0 = K^*(\delta) - 1  \le K^*(\epsilon) -1 \le K(\epsilon)
\]
and
\eq{e6}{
\log q_j  \ge2^{-1-(j-i)/c}\log q_i  \quad(0\le i\le j \le \log(1/\delta) /\log 2) ,
}
by\ \eqref{e10}. Set 
\[
J=\max\left(\{0\}\cup\{j\in\SN:2^j\le1/\delta\ \text{and}\ q_j\ge(2^j\delta/\epsilon)^2\}\right).
\]
Then Theorem\ \ref{main-upper} and relation\ \eqref{e6} imply that, for $y\ge K(\epsilon) \ge q_0$, we have that
\al{
Q_{\mathcal{F}_y}(\epsilon)
	&\ll_A\frac1{\log q_0}+\frac{\epsilon}{\delta} \sum_{j=1}^J\frac1{2^j\log q_j}
		+\frac\epsilon\delta\sum_{j=0}^J\frac{\log q_J}{2^J\log q_j}    \nn
	&\ll \frac1{\log q_0} + \frac{\epsilon}{\delta} \sum_{j=1}^J\frac1{2^{j-j/c}\log q_0}
		+ \frac{\epsilon}{\delta}\sum_{j=0}^J\frac{\log q_J}{2^{J-j/c}\log q_0}    \nn
	&\ll \frac{\min\left\{1/(c-1),1+J\epsilon/\delta\right\}}{\log q_0}
		+\frac\epsilon\delta\frac{\log q_J}{2^{J(1-1/c)}\log q_0}     \nn
	&\ll \frac{\min\left\{1/(c-1),1 + J\epsilon/\delta\right\}}{\log q_0} 
		+\frac\epsilon\delta\frac{\log q_J}{\max\{1,(J+1)(c-1)\}\log q_0}.    \label{e3}
}
Finally, note that if $2^{J+1}\le1/\delta$, then the maximality of $J$ and\ \eqref{e6} imply that 
\[
\log q_J\le4\log q_{J+1} \le 8 \log(2^{J+1}\delta / \epsilon) \ll J+1+\log(\delta/\epsilon).
\] 
On the other hand, if $2^{J+1}>1/\delta$, then $q_J\le K^*(1/2)\ll1$, since $g(t)\ll(\log t)^{-c}$. In any case, we find that $\log q_J \ll J+1+\log(\delta/\epsilon)$. So the inequalities
\[
\frac{J\epsilon}{\delta} 
	\ll \frac{\epsilon\log(1/\delta)}{\delta}
	\le \log(1/\epsilon) 
\quad\text{and}\quad
	\frac{\epsilon}{\delta} \log(\delta/\epsilon) \ll 1
\]
and relation \eqref{e3} imply that
\[
Q_{\mathcal{F}_y}(\epsilon)  \ll   \frac{ \min\{1/(c-1),\log(1/\epsilon)\}}{\log q_0} .
\]
Finally, we have that $\log q_0\asymp_A \log K^*(\epsilon) \asymp \log K(\epsilon)$, by \eqref{e400} and the fact that $K^*-1\le K\le 2K^*$. So the upper bound in Theorem \ref{main thm} follows.
\end{proof}


\section*{Acknowledgents}

I would like to thank G\'erald Tenenbaum for pointing out relation\ \eqref{c<1} to me. I am also grateful to R\'egis de la Bret\`eche and Maksym Radziwill for some helpful comments. In addition, I would like to thank the referees who handled the paper, as their comments exposed some inaccuracies and improved the exposition of the main ideas. This paper was largely written while visiting Universit\'e Paris-Diderot, which I would like to thank for its hospitality. During that time I was a postdoctoral fellowship at the Centre de recherches math\'ematiques at Montr\'eal, which I would like to thank for the financial support.


\bibliographystyle{amsplain}

\end{document}